\long\def\onefigure#1#2{
\begin{figure*}[tbp]
\begin{center}
#1
\end{center}
\caption{#2}
\end{figure*}
} 
\newcommand{\lipefig}[2]  
{\onefigure{\mbox{\psfig{file=#1.eps}}}{\label{f:#1} #2} }
\newcommand{\R}{\mathbb{R}}
\newcommand{\p}{\mathcal{P}}
\newcommand{\Sp}{\mathbb{S}}
\newcommand{\I}{\mathbf{1}}
\DeclareMathOperator{\interior}{int}
\DeclareMathOperator{\nor}{\mathcal{N}}
\DeclareMathOperator{\card}{card}
\newcommand{\Sn}{\mathbb{S}^{n-1}}
\newcommand{\Hc}{\mathcal{H}}
\newcommand{\D}{{\rm d}}
\newcommand{\dif}{{\rm D}}
\newcommand{\bd}{{\rm bd}\,}
\newcommand{\DKB}{\mathcal{D}_{K,B}}
\newcommand{\nbk}{\mathcal{N}(K,B)}
\newtheorem{Proposition}{Proposition}[section]
\newtheorem{Lemma}[Proposition]{Lemma}
\newtheorem{Theorem}[Proposition]{Theorem}
\numberwithin{equation}{section}
\title{Affine diameters of convex bodies}
\author{Imre B\'ar\'any}
\author{Daniel Hug}
\author{Rolf Schneider}
\email{barany.imre@renyi.mta.hu}
\email{daniel.hug@kit.edu}
\email{rolf.schneider@math.uni-freiburg.de}
\address{R\'enyi Institute of Mathematics, 
Hungarian Academy of Sciences, 
PO Box 127, 1364 Budapest, Hungary
}
\address{Karlsruhe Institute of Technology (KIT),  
Department of Mathematics, 
D-76128 Karlsruhe, Germany
}
\address{Mathematisches Institut, Albert-Ludwigs-Universit\"at, 
D-79104 Freiburg i. Br., Germany}
\thanks{Partially supported by ERC Advanced Research Grant no 267165 (DISCONV). The first
author was supported by Hungarian National Foundation Grant K 83767. 
The second author was partially supported by the German Research Foundation (DFG) under the grant HU 1874/4-2.}
\subjclass[2010]{Primary 52A20, 52A40}
\date{}
\dedicatory{}
\begin{document}

\begin{abstract}
We prove sharp inequalities for the average number of affine diameters through the points of a convex body $K$ in ${\mathbb R}^n$. These inequalities hold if $K$ is either a polytope or of dimension two. An example shows that the proof given in the latter case does not extend to higher dimensions.
\end{abstract}

\maketitle

\section{Introduction}

An {\em affine diameter} of an $n$-dimensional convex body in ${\mathbb R}^n$ is a closed segment connecting two points that lie in distinct parallel supporting hyperplanes of the body. Much work has been done on intersection properties of affine diameters and on the characterization of special convex bodies, such as simplices, by such intersection properties. We refer the reader to the survey article by Soltan \cite{Sol05}. As the author points out, that survey does not cover results on affine diameters of typical convex bodies, in the Baire category sense; it also does not touch average numbers of intersections. A result of Baire type, proved by B\'{a}r\'{a}ny and Zamfirescu \cite{BZ90}, says that {\em in most convex bodies, most points belong to infinitely many affine diameters}. This, however, does not imply that the average number of affine diameters through the points of a typical convex body must be infinite, since the set of most points addressed in the theorem can be of measure zero. In fact, in the plane it follows from a result of Hammer and Sobczyk \cite{HS53} that for a convex body with no pair of boundary segments in distinct parallel supporting lines, the set of points through which there pass infinitely many affine diameters is of measure zero.

In this paper, we are concerned with the average number of affine diameters through the points of a convex body. For a convex body $K\subset {\mathbb R}^n$ and a point $z\in{\rm int}\,K$ we denote by $N_a(K,z)$ the number ($\infty$ admitted) of affine diameters passing through $z$. We define the mean number of affine diameters passing through a point of $K$ by
\begin{equation}\label{D1}
N_a(K):=\frac{1}{V_n(K)}\int_K N_a(K,z)\, \D z,
\end{equation}
where $V_n$ denotes the volume and $\D z$ indicates integration with respect to Lebesgue measure. (The function $N_a(K,\cdot)$ is Borel measurable; see Section 2.) Some caution is advisable. Recall that for a convex body $K\subset {\mathbb R}^n$ and a unit vector $u\in\R^n$, the set $F(K,u)$ is the support set of $K$ with outer normal vector $u$. If there is a vector $u$ such that $\dim (F(K,u)+F(K,-u))=n-1$ and $\dim F(K,u)+\dim F(K,-u)>n-1$, then there is a set of positive measure in $K$ through each point of which there pass infinitely many affine diameters, thus $N_a(K)=\infty$ for such a body. Incidentally, this shows that the function $N_a$ is not continuous on the space of convex bodies with the Hausdorff metric. Generally, we say for a convex body $K\subset{\mathbb R}^n$ that $K$ and $-K$ are {\em in general relative position} if
$$ \dim(F(K,u) + F(K,-u)) = n-1 \;\Rightarrow\; \dim F(K,u) + \dim F(K,-u) = n-1$$
for all $u\in\Sn$. (We warn the reader that this notion appears in the literature also with a more restrictive definition.) Thus, $N_a(K)<\infty$ can only be expected if $K$ and $-K$ are in general relative position. The following seems to be unknown.

\vspace{3mm}

\noindent{\bf Question.} Is $N_a(K)<\infty$ if $K$ and $-K$ are in general relative position?

\vspace{3mm}

The following theorems give affirmative answers, in a strengthened form, if either $K$ is a polytope or if the dimension is two.

\begin{Theorem}\label{Thm1}
Let $P\subset\R^n$ be an $n$-polytope such that $P$ and $-P$ are in general relative position. Then
\begin{equation}\label{T1}
N_a(P)=\frac{n+1}{V_n(P)}\int_0^1V_n((1-t)P-tP)\, \D t -1.
\end{equation}
This implies that
\begin{equation}\label{T2}
n< N_a(P)\le 2^n-1.
\end{equation}
Equality on the right-hand side holds if and only if $P$ is a simplex. The lower bound $n$ is sharp, but is not attained.
\end{Theorem}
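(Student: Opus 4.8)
The plan is to establish the integral formula \eqref{T1} first, and then derive the bounds \eqref{T2} from it. For the integral formula, the key observation is that $z \in \operatorname{int} P$ lies on an affine diameter in direction $u$ (i.e. a diameter with endpoints in $F(P,u)$ and $F(P,-u)$) precisely when $z$ lies on a chord of $P$ whose two endpoints lie in support sets with opposite outer normals. Such a chord in a fixed direction through $z$ is an affine diameter iff the endpoints $z + s v$ and $z - s' v$ (for the chord direction $v$) have the property that $P$ has parallel supporting hyperplanes there. The standard device is to pass to the difference body $DP = P + (-P) = \{x - y : x, y \in P\}$: a segment $[x,y]$ with $x \in F(P,u)$, $y \in F(P,-u)$ corresponds to the point $x - y \in F(DP, u)$, and affine diameters through $z$ correspond bijectively (up to the general-position hypothesis ruling out degeneracies) to boundary points of $DP$ visible from $z$ in a suitable sense — more precisely, to pairs of opposite faces of $P$ summing to a facet of $DP$. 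Counting these for a polytope reduces to counting the facets of $DP$ that "see" $z$, and integrating over $z \in P$ is where the mixed-volume-type integral $\int_0^1 V_n((1-t)P - tP)\,\D t$ enters, via a decomposition of $P$ (or of the relevant region) into cones over the facets of $DP$. The coefficient $n+1$ and the $-1$ come out of this cone decomposition together with a scaling/homogeneity computation; I would expect the cleanest route is to integrate $N_a(P,z)$ by writing it as a sum over opposite facet-pairs of indicator functions of cones, then use the substitution $z = (1-t)x - ty$ and Fubini.

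For the right-hand inequality in \eqref{T2}, I would bound $V_n((1-t)P - tP)$ pointwise. By the Brunn–Minkowski inequality or, more directly, by the fact that $(1-t)P - tP = (1-t)P + t(-P)$ and $-P$ is a translate of... no — instead use that $(1-t)P + t(-P) \subseteq$ a translate of $P - P = DP$ scaled appropriately; in fact $(1-t)P - tP \subseteq DP$ is false in general, but $(1-t)P + t(-P)$ has volume at most $V_n(DP)$ when... Actually the clean bound is $V_n((1-t)P - tP) \le V_n(P-P) \cdot \max((1-t),t)^n$? That is not right either. The correct tool is: for $t \in [0,1]$, $(1-t)P + t(-P) \subseteq P + (-P) = DP$ only up to translation is also false. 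I would instead use the inclusion $(1-t)P - tP \subseteq (1-t)(P - P_0) + t(P_0 - P)$ for a chosen center and directly estimate via the elementary bound $V_n((1-t)P - tP) \le V_n(DP)$ after centering; combined with the Rogers–Shephard inequality $V_n(DP) \le \binom{2n}{n} V_n(P)$ this is too weak, so instead I would integrate $V_n((1-t)P-tP)$ exactly using its polynomial expansion $\sum \binom{n}{k}(1-t)^k t^{n-k} V(P[k],-P[n-k])$ and note $\int_0^1 (1-t)^k t^{n-k}\,\D t = k!(n-k)!/(n+1)!$, turning the right side of \eqref{T1} into $\frac{n+1}{V_n(P)} \cdot \frac{1}{n+1}\sum_k \binom{n}{k}\frac{k!(n-k)!}{n!} V(P[k],-P[n-k]) \cdot n! / ... $ — carefully, $\binom{n}{k}\frac{k!(n-k)!}{(n+1)!} = \frac{1}{n+1}$, so the right side becomes $\frac{1}{V_n(P)}\sum_{k=0}^n V(P[k],-P[n-k]) - 1$; the sum is exactly $V_n(DP)$ (the full mixed-volume expansion of $V_n(P + (-P))$ divided by... up to the binomial factors, $V_n(P+(-P)) = \sum_k \binom{n}{k} V(P[k],-P[n-k])$, which is NOT $\sum_k V(P[k],-P[n-k])$). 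So the combinatorial identity to nail down is $\int_0^1 V_n((1-t)P - tP)\,\D t = \frac{1}{n+1}\sum_{k=0}^n V(P[k],-P[n-k])$, and then $N_a(P) = \frac{1}{V_n(P)}\sum_{k=0}^n V(P[k],-P[n-k]) - 1$. Now $V(P[n],-P[0]) = V(P[0],-P[n]) = V_n(P)$ always, and each mixed volume $V(P[k],-P[n-k]) \le V_n(P)$ by the Alexandrov–Fenchel inequality together with translation invariance (since $V(P[k],-P[n-k])^{?}$... actually $V(P[k],-P[n-k]) \le V_n(P)$ follows because $-P$ and $P$ have equal mixed volumes with themselves and the Minkowski-type inequality gives $V(P[k],(-P)[n-k]) \le V_n(P)^{k/n}V_n(-P)^{(n-k)/n} = V_n(P)$). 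Summing $n+1$ terms each $\le V_n(P)$ gives $N_a(P) \le (n+1) - 1 = n$?! — that contradicts the claimed $2^n - 1$, so the pointwise bound $V(P[k],-P[n-k]) \le V_n(P)$ must FAIL here; indeed it fails because the Minkowski inequality $V(K[k],L[n-k]) \le V_n(K)^{k/n}V_n(L)^{(n-k)/n}$ requires... it is actually TRUE in general (it is a consequence of Alexandrov–Fenchel). This tension tells me the integral formula as I've parsed it cannot be right and I must instead interpret $(1-t)P - tP$ and the combinatorics more carefully — and I expect pinning down the correct cone decomposition and the resulting exact identity (reconciling with the known fact $V_n(DP) = \binom{2n}{n}V_n(P)$ for a simplex, which must give $N_a = 2^n - 1$) is the main obstacle.

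For the remaining claims: equality $N_a(P) = 2^n - 1$ for the simplex should follow from \eqref{T1} by direct computation using $V_n((1-t)\Delta - t\Delta)$, which for the standard simplex is an explicit polynomial in $t$ (its difference body is a known polytope), and the characterization of equality comes from the equality case of the underlying geometric inequality used for the upper bound — I would identify which faces of $DP$ must be simplices and lift that back to $P$. The lower bound $n < N_a(P)$: from \eqref{T1}, $N_a(P) + 1 = \frac{n+1}{V_n(P)}\int_0^1 V_n((1-t)P - tP)\,\D t$, and since $V_n((1-t)P - tP) \ge \max(1-t,t)^n V_n(P)$ (as $(1-t)P - tP \supseteq (1-t)P - t z_0$, a translate of $(1-t)P$, when $z_0 \in P$; similarly $\supseteq$ a translate of $t(-P)$), we get $\int_0^1 V_n((1-t)P-tP)\,\D t \ge V_n(P)\int_0^1 \max(1-t,t)^n\,\D t = V_n(P) \cdot \frac{2}{n+1}\cdot\frac{1}{2^{n+1}}\cdot(\ldots)$ — the point is this integral strictly exceeds $\frac{1}{n+1}$ because $(1-t)P - tP$ strictly contains a translate of $(1-t)P$ for $t > 0$ (by general position $P$ and $-P$ are not homothetic or the inclusion is strict on a set of positive measure), giving $N_a(P) > n$; non-attainment because the deficit is always strictly positive. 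I would present these three consequences as short lemmas after the integral formula is secured, with the equality analysis being the only other place needing genuine care.
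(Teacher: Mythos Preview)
Your plan for the integral formula \eqref{T1} is essentially what the paper does: decompose affine diameters via pairs $(F,G)$ of opposite faces with $F+G$ a facet of $DP$, form the ``cones'' $A(F,G)=\{(1-t)x-ty:t\in[0,1],\,x\in F,\,y\in G\}$, integrate, and arrive at
\[
N_a(P)=\frac{1}{V_n(P)}\sum_{k=0}^{n}V(P[k],-P[n-k])\;-\;1,
\]
together with the beta-integral identity $\int_0^1 V_n((1-t)P-tP)\,\D t=\frac{1}{n+1}\sum_k V(P[k],-P[n-k])$. That part is fine.

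The genuine gap is in the bounds, and it stems from one sign error that derails the rest of your analysis. The Minkowski-type inequality (a consequence of Alexandrov--Fenchel) reads
\[
V(P[k],-P[n-k])\;\ge\;V_n(P)^{k/n}V_n(-P)^{(n-k)/n}=V_n(P),
\]
with the inequality going the \emph{opposite} way from what you wrote. So there is no contradiction with your (correct) formula; rather, this inequality immediately gives the \emph{lower} bound $N_a(P)\ge n$, with equality iff $-P$ is a translate of $P$, i.e.\ $P$ is centrally symmetric --- which is impossible under the general-position hypothesis, whence $N_a(P)>n$ strictly. Your containment argument $(1-t)P-tP\supseteq$ translate of $(1-t)P$ only yields $\int_0^1 V_n((1-t)P-tP)\,\D t\ge V_n(P)\int_0^1\max((1-t)^n,t^n)\,\D t$, and that integral is strictly less than $1$, so it does not reach $N_a(P)\ge n$.

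For the \emph{upper} bound you are left with nothing: the Minkowski inequality points the wrong way, and the elementary inclusions you try are too crude. The paper does not manufacture the upper bound from mixed-volume inequalities at all; it invokes a theorem of Rogers and Shephard (\emph{J.\ London Math.\ Soc.}\ \textbf{33} (1958), Theorem~2) stating
\[
V_n(K)\;\le\;\int_0^1 V_n((1-t)K-tK)\,\D t\;\le\;\frac{2^n}{n+1}\,V_n(K),
\]
with equality on the right iff $K$ is a simplex, and on the left iff $K$ is centrally symmetric. Plugging this into \eqref{T1} gives \eqref{T2} and both equality characterizations at once. Without this result (or an equivalent), your proposal has no route to $N_a(P)\le 2^n-1$.
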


\begin{Theorem}\label{Thm2}
Let $K\subset\R^2$ be a two-dimensional convex body such that $K$ and $-K$ are in general relative position. Then
\begin{equation}\label{T3}
1\le N_a(K) \le \frac{V_2(K-K)}{2V_2(K)}\le 3.
\end{equation}
Equality on the left side is attained if and only if $K$ is centrally symmetric. Equality on the right side is attained if and only if $K$ is a triangle.
\end{Theorem}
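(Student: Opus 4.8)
The plan is to treat the three inequalities separately, with only one of them requiring real work. The left inequality is immediate from the classical fact that through every interior point of a convex body there passes at least one affine diameter; in the plane this can be seen by an intermediate‑value argument applied to $\varphi\mapsto\nu(\varphi)-\nu(-\varphi)-\pi$ on $\Sn$, where $\nu(\varphi)$ denotes an outer normal angle of $K$ at the boundary point met by the ray from $z$ in direction $\varphi$, a map that must have a zero because $\nu(\varphi)$ and $\nu(-\varphi)$ each wind once as $\varphi$ runs over $\Sn$. Hence $N_a(K,z)\ge 1$ for all $z\in{\rm int}\,K$, and integrating gives $N_a(K)\ge 1$. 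The rightmost inequality $V_2(K-K)/(2V_2(K))\le 3$ is the case $n=2$ of the Rogers--Shephard inequality $V_n(K-K)\le\binom{2n}{n}V_n(K)$, whose equality case characterizes simplices, i.e.\ triangles when $n=2$. Consequently, if $N_a(K)=3$ then equality holds throughout $N_a(K)\le V_2(K-K)/(2V_2(K))\le 3$, so $K$ is a triangle; conversely, for a triangle every supporting line not through a vertex meets $K$ in an edge and its only parallel supporting line touches the opposite vertex, so the affine diameters of a triangle are exactly the segments joining a vertex to a point of the relative interior of the opposite edge, and through each interior point there pass exactly three of them — one per vertex. This settles the right‑hand inequality and its equality case.

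The substantive step is the middle inequality $N_a(K)\le V_2(K-K)/(2V_2(K))$. First I would settle it for polygons by means of Theorem~\ref{Thm1}: for $n=2$ one expands $V_2((1-t)P-tP)=[(1-t)^2+t^2]\,V_2(P)+2t(1-t)\,V(P,-P)$, where $V$ is the mixed area, and integrating over $[0,1]$ turns \eqref{T1} into
\[
N_a(P)=\frac{3}{V_2(P)}\left(\frac23 V_2(P)+\frac13 V(P,-P)\right)-1=1+\frac{V(P,-P)}{V_2(P)}=\frac{V_2(P-P)}{2V_2(P)},
\]
so for polygons in general relative position the middle inequality is actually an equality.

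Next I would pass to a general $K$ in general relative position by approximating it in the Hausdorff metric by polygons $P_m$ that are again in general relative position — this is possible because in the plane the condition merely forbids a pair of parallel boundary segments lying in distinct parallel supporting lines, which can be preserved by choosing the vertices of $P_m$ suitably on $\bd K$. Since mixed areas are continuous, $V_2(P_m-P_m)/(2V_2(P_m))\to V_2(K-K)/(2V_2(K))$, so it remains to prove the semicontinuity $N_a(K)\le\liminf_m N_a(P_m)$. This is the main obstacle, because $N_a$ is \emph{not} continuous: for the disk one has $N_a(B^2)=1$ whereas $V_2(B^2-B^2)/(2V_2(B^2))=2$, so affine diameters of the approximants typically collapse in the limit and only this one‑sided bound can survive. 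The plan for it is to show that $k$ distinct affine diameters of $K$ through a point $z\in{\rm int}\,K$ persist under approximation — each arising as a Hausdorff limit of affine diameters of $P_m$ that, for $m$ large, remain pairwise distinct and pass within $o(1)$ of $z$ — and then to deduce $\liminf_m N_a(P_m)V_2(P_m)\ge N_a(K)V_2(K)$ by a Fatou‑type estimate. An alternative that avoids approximation is to parametrize the affine diameters of $K$ by their normal direction, obtaining a map $F(\theta,s)=(1-s)q(\theta)+s\,p(\theta)$ of a parameter domain onto $K$ with $\#F^{-1}(z)=N_a(K,z)$ for a.e.\ $z$, and to bound $\int_K N_a(K,z)\,\D z=\int|J_F|$ against $\tfrac12 V_2(K-K)$ using that $J_F$ is affine in $s$ along each diameter; here the planar topology $\bd K\cong\Sn$ enters decisively, and — as the abstract records — it is precisely this ingredient that an example shows cannot be transferred to higher dimensions.

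Finally, the left‑hand equality case. If $N_a(K)=1$ then $N_a(K,z)=1$ for a.e.\ $z$, so the (a.e.\ unique) affine diameters foliate ${\rm int}\,K$. Any two affine diameters with distinct normal directions have interleaved endpoints on the circle $\bd K$ — both endpoints advance in the same sense — hence they meet at an interior point; were that intersection not common to the whole family, a neighbourhood of it would lie in $\{z:N_a(K,z)\ge 2\}$, contradicting a.e.\ uniqueness. Thus all affine diameters of $K$ pass through a single point $o$; in particular $K$ must be strictly convex with $\bd K$ of class $C^1$, since a boundary segment would (by general relative position) create a fan of affine diameters filling a triangle of positive area whose interior points lie on no other affine diameter, and a corner would force infinitely many non‑concurrent diameters out of one boundary point. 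Writing $\bd K$ in polar coordinates about $o$ as $\rho(\varphi)(\cos\varphi,\sin\varphi)$, concurrency says the two boundary points with opposite outer normals lie on opposite rays from $o$, which amounts to the angle between the radius vector and the outer normal being $\pi$‑periodic in $\varphi$, i.e.\ $(\log\rho)'(\varphi+\pi)=(\log\rho)'(\varphi)$; hence $\rho(\varphi+\pi)=c\,\rho(\varphi)$ for a constant $c$, and $2\pi$‑periodicity of $\rho$ forces $c=1$, so $K$ is centrally symmetric about $o$. The converse is immediate: if $K$ is centrally symmetric and in general relative position it is strictly convex, its affine diameters are exactly the chords through the centre, and every point other than the centre lies on precisely one of them, so $N_a(K)=1$.
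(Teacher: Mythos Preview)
Your treatment of the outer inequalities is fine and matches the paper: the left bound comes from the fact that every interior point lies on an affine diameter, and the right bound is Rogers--Shephard with its equality case.

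The substantive gap is in the middle inequality. Your main plan is to reduce to polygons via Theorem~\ref{Thm1} (where indeed $N_a(P)=V_2(DP)/(2V_2(P))$) and then pass to the limit using $N_a(K)\le\liminf_m N_a(P_m)$. You correctly flag this semicontinuity as ``the main obstacle'', but you do not prove it, and the sketch you give does not close. Knowing that an affine diameter of $K$ through $z$ is a Hausdorff limit of affine diameters of $P_m$ tells you only that those approximating diameters pass \emph{near} $z$, not through it; turning this into a pointwise or integrated Fatou bound requires exactly the kind of transversality/regularity control that is missing. The paper does not attempt any approximation. Instead it works directly with $K$: it builds, in Section~4, the Lipschitz $B$-projection (Lemma~\ref{L4.1}), the relative normal bundle $\nor(K,-K)$, and a generalized curvature $k(K,-K;x,y)$, then applies the coarea formula to the Lipschitz map $\Phi:\nor(K,-K)\times[0,1]\to K$, $(x,y,t)\mapsto(1-t)x-ty$. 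The Jacobian contains a difference of two nonnegative terms; replacing $|\,\cdot\,|$ by the sum and invoking Lemma~\ref{Lemma4.5} converts the two pieces into $\tfrac12\int h(DK,u)\,S_1(K,\D u)$ and $\tfrac12\int h(DK,u)\,S_1(-K,\D u)$, whose sum is $V_2(DK)$. Your ``alternative'' parametrization $F(\theta,s)=(1-s)q(\theta)+s\,p(\theta)$ is, in spirit, this very map; but $\theta\mapsto(q(\theta),p(\theta))$ need not be Lipschitz when $\bd K$ has flat pieces, and making it rigorous forces you back to the normal-bundle parametrization and the Section~4 machinery. That machinery --- in particular the Lipschitz property of the relative projection, Lemma~\ref{L4.1} --- is the genuinely two-dimensional input; the final section of the paper constructs a three-dimensional pair $K,B$ for which this Lipschitz property fails, which is the ``example'' alluded to in the abstract.

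For the left equality case your route diverges from the paper as well. The paper argues: if not all affine diameters are concurrent, three of them bound a nondegenerate triangle $\Delta$; since the oriented affine diameter depends continuously on its angle, an intermediate-value argument puts at least two affine diameters through every interior point of $\Delta$, giving $N_a(K)\ge 1+\lambda_2(\Delta)/\lambda_2(K)>1$. Concurrency then implies central symmetry by Hammer's theorem. Your version asserts that a neighbourhood of a non-common intersection point lies in $\{N_a(K,\cdot)\ge 2\}$, which is morally the same but needs the continuity-in-angle argument you omit. Your subsequent polar-coordinate computation replacing Hammer's theorem requires $\bd K$ to be $C^1$ about the concurrent point; the justification you offer (``a corner would force infinitely many non-concurrent diameters'') is not conclusive, since a corner at $x$ and an antipodal corner at $-x$ could in principle absorb a whole arc of normal directions into the single chord $[x,-x]$ without destroying concurrency. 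Citing Hammer, as the paper does, sidesteps this.
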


A comparison of the sharp lower bounds in (\ref{T2}) and (\ref{T3}) shows incidentally that the function $N_a$ is not continuous, even if restricted to the set of planar convex bodies $K$ for which $K$ and $-K$ are in general relative position. In fact, such a body $K\in{\mathcal K}^2$ which is centrally symmetric and hence satisfies $N_a(K)=1$, can be approximated arbitrarily closely by convex polygons $P$ with $P$ and $-P$ in general relative position, for which $N_a(P)>2$.

To the question posed above, we can also give a positive answer in $n$ dimensions, if we assume in addition that $K$ has a support function of class $C^2$. Since this is technically more involved, it will be considered elsewhere.

\section{Preliminaries}

Let $\R^n$ be equipped with the standard scalar product $\langle\cdot,\cdot \rangle$ and the induced norm $\|\cdot\|$. We write $o$ for the origin (zero vector) of $\R^n$. The linear subspace orthogonal to a vector $u\neq o$ is denoted by $u^\perp$. Unit ball and unit sphere of ${\mathbb R}^n$ are denoted, respectively, by $B^n$ and  $\Sn$.

We denote by ${\mathcal K}^n$ the set of convex bodies (nonempty, compact, convex subsets) in $\R^n$. The set ${\mathcal P}^n\subset{\mathcal K}^n$ is the set of convex polytopes. For a polytope $P$, the set of $r$-dimensional faces of $P$ is denoted by ${\mathcal F}_r(P)$, $r=0,\dots,n-1$.

The support function $h(K,\cdot)$ of a convex body $K$ is defined by $h(K,x):= \max\{\langle x,y\rangle: y\in K\}$, and for $u\in\Sn$, the  hyperplane
$$H(K,u):= \{x\in\R^n: \langle u,x\rangle = h(K,u)\}$$
is the supporting hyperplane of $K$ with outer normal vector $u$. The face (or support set) of $K$ in direction $u$ is the set $F(K,u)= K\cap H(K,u)$. By $N(K,x)$ we denote the normal cone of $K$ at its boundary point $x$, that is, the set of all outer normal vectors to $K$ at $x$, together with the zero vector. If $K$ is smooth (i.e., has only regular boundary points), then to each $x\in{\rm bd}\,K$ there is a unique outer unit normal vector to $K$ at $x$; we denote it by $u_K(x)$.

As usual, $K+M=\{x+y:x\in K,\, y\in M\}$ for $K,M\in{\mathcal K}^n$ and $\mu K:= \{\mu x: x\in K\}$ for $\mu\in \R$. In particular,
$$ {\rm D}K:= K-K =\{x-y:x,y \in K\}$$
is the difference body of $K$. For $u\in \Sn$, $h({\rm D}K,u)$ is the width of $K$ in direction $u$.

Lebesgue measure on ${\mathbb R}^n$ is denoted by $\lambda_n$. We also make use of the $k$-dimensional Hausdorff measure,  ${\mathcal H}^k$. For the volume of convex bodies in $\R^n$ we prefer the notation $V_n$, and by $V(\cdot,\dots,\cdot)$ ($n$ arguments) we denote the mixed volume. For this, and for some notation and results used below, we refer to \cite{Sch14}, Section 5.1. The $(n-1)$-dimensional mixed volume of convex bodies lying in parallel $(n-1)$-dimensional affine subspaces is denoted by $v(\cdot,\dots,\cdot)$ ($n-1$ arguments).

To show that the integral (\ref{D1}) is defined, let $M_{k,m}$ be the set of all points $x\in{\rm int}\,K$ through which there pass at least $k$ affine diameters, each two of which form an angle at least $1/m$, where $k,m\in{\mathbb N}$. If $(x_j)_{j\in{\mathbb N}}$ is a sequence in $M_{k,m}$ converging to some point $x\in{\rm int}\,K$, then, choosing suitable convergent subsequences of affine diameters, we see that through $x$ there pass at least $k$ affine diameters, each two of them forming an angle at least $1/m$. Thus, the set $\{x\in{\rm int}\,K: N_a(K,x)\ge k\}$ is the union of countably many closed sets and hence is a Borel set. Since this holds for all $k\in{\mathbb N}$, the function $N_a(K,\cdot)$ is Borel measurable.

\section{Proof of Theorem \ref{Thm1}}

Let $P\in\p^n$ be an $n$-polytope with the property that $P$ and $-P$ are in general relative position. By $D$ we denote the set of all points $z$ which lie in the convex hull of any two faces of $P$ where this convex hull has dimension less than $n$. Let $z\in P\setminus D$ and $z\in[x,y]$, where $[x,y]$ is an affine diameter of $P$. Then there is some $u\in \Sn$ such that $x\in F(P,u)$ and $y\in F(P,-u)= -F(-P,u)$, and hence $x-y\in F(P,u)+F(-P,u)=F(P-P,u)=F({\rm D}P,u)$ (where we used \cite{Sch14}, Thm. 1.7.5(c)). Let $r:=\dim F(P,u)$ and $s:= \dim F(P,-u)$. Since $z\notin D$, we have $\dim(F(P,u)+F(P,-u))=n-1$. Since $P$ and $-P$ are in general relative position, it follows that $r+s=n-1$. Hence, every affine diameter of $P$ through $z$ is of the form $[x,y]$ with $x\in F$ and $y\in -G$ for some faces $F\in\mathcal{F}_r(P)$ and $G\in\mathcal{F}_s(-P)$ satisfying $r+s= n-1$ and $F+G\in\mathcal{F}_{n-1}({\rm D}P)$. Since $z\notin D$, in fact  $x\in {\rm relint}\,F$  and $y\in {\rm relint}(-G)$. For any such pair $F,G$ we define
$$
A(F,G):=\{(1-t)x-ty:t\in [0,1],\,x\in F,\,y\in G\}.
$$
Then for $z\in P\setminus D$ there is a one-to-one correspondence between the affine diameters through $z$ and the pairs $F,G$ with $z\in A(F,G)$. Thus, for $z\in P\setminus D$,
$$
N_a(P,z)=\frac{1}{2}\sum_{r=0}^{n-1}\sum_{\ast}\I\{z\in A(F,G)\},
$$
where the summation $\sum_{\ast}$ extends over all faces $F\in\mathcal{F}_r(P)$ and all faces $G\in\mathcal{F}_{n-1-r}(-P)$ such that $F+G\in\mathcal{F}_{n-1}({\rm D}P)$. Since $D$ is of measure zero, we conclude that
\begin{equation}\label{0}
N_a(P)=\frac{1}{2V_n(P)}\sum_{\ast}\lambda_n(A(F,G)).
\end{equation}
We can write this as
\begin{equation}\label{1}
N_a(P)=\frac{1}{2V_n(P)}\sum_{u\in\Sn}\lambda_n(A(F(P,u),F(-P,u))),
\end{equation}
since a summand is different from zero only if $F(P,u)+F(-P,u)=F({\rm D}P,u)$ is an $(n-1)$-face of ${\rm D}P$.

Assume that $F,G$ is such a pair as in (\ref{0}), and let $u$ be the outer unit normal vector of ${\rm D}P$ at $F+G$. The width of $P$ in direction $u$ is given by $h:=h({\rm D}P,u)$. Writing
$$ H_\tau:= (1-\tau/h)H(P,u)+(\tau/h)H(P,-u)$$
for $0\le \tau\le h$, we have
$$ A(F,G)\cap H_\tau =(1-\tau/h)F-(\tau/h)G.$$
Therefore, Fubini's theorem together with the substitution $\tau= th$ gives
\begin{align}
\lambda_n(A(F,G)) &= \int_0^h \lambda_{n-1}(A(F,G)\cap H_\tau)\,\D \tau \nonumber\\
&= \int_0^1 \lambda_{n-1}((1-t)F-tG)h\,\D t\nonumber\\
&= \int_0^1 \sum_{k=0}^{n-1} \binom{n-1}{k} (1-t)^kt^{n-1-k} v(F[k],-G[n-1-k])h\,\D t\nonumber\\
&= \frac{1}{n}\sum_{k=0}^{n-1} h({\rm D}P,u)v(F[k],-G[n-1-k]),\label{2}
\end{align}
where $v(F[k],-G[n-1-k])$ denotes the $(n-1)$-dimensional mixed volume of $F$ taken $k$ times and $-G$ taken $n-1-k$ times.

Combining equations (\ref{1}) and (\ref{2}) and using formula (5.23) of \cite{Sch14}, we get
\begin{align}
N_a(P)&=\frac{1}{2V_n(P)} \sum_{u\in\Sn} \frac{1}{n}\sum_{j=0}^{n-1}h({\rm D}P,u)v(F(P,u)[j],F(-P,u)[n-1-j])\nonumber\\
&=\frac{1}{2V_nP)}\sum_{j=0}^{n-1}V({\rm D}P,P[j],-P[n-1-j])\nonumber\\
&=\frac{1}{2V_n(P)}\sum_{j=0}^{n-1}\big\{V(P[j+1],-P[n-1-j])+V(P[j],-P[n-j])]\big\}\nonumber\\
&=\frac{1}{V_n(P)}\left[\sum_{k=0}^{n}V(P[k],-P[n-k])\right]-1.\label{3}
\end{align}

On the other hand, we have
\begin{align}
&\int_0^1V_n((1-t)P-tP)\, \D t\nonumber\\
&=\int_0^1\sum_{k=0}^{n}\binom{n}{k}(1-t)^k t^{n-k} V(P[k],-P[n-k])\, \D t\nonumber\\
&=\frac{1}{n+1}\sum_{k=0}^{n}V(P[k],-P[n-k]).\label{4}
\end{align}

In view of (\ref{3}) and (\ref{4}), the proof of equation (\ref{T1}) is complete.

The inequalities (\ref{T2}) follow from (\ref{T1}) and the inequalities
\begin{equation}\label{RS}
V_n(K)\le\int_0^1V_n((1-t)K-tK)\, \D t\le \frac{2^n}{n+1}V_n(K),
\end{equation}
which are due to Rogers and Shephard (\cite[Theorem 2]{RS58}; note that the formulation there involves an associated convex body, but is equivalent to (\ref{RS})). They hold for all $n$-dimensional convex bodies $K\in\mathcal{K}^n$. Equality on the right holds if and only if $K$ is an $n$-simplex. Equality on the left holds if and only if
$K$ is centrally symmetric. A polytope $P$ for which $P$ and $-P$ are in general relative position cannot be centrally symmetric,
hence we have strict inequality on the left side of (\ref{T2}). On the other hand, the right side of (\ref{T1}) is a continuous function of $P$ in the Hausdorff metric. A centrally symmetric polytope can be approximated arbitrarily closely by polytopes $P$ with $P,-P$ in general relative position (as follows, e.g., from the proof of \cite[Theorem 3.7]{Sch94}). Therefore, the lower bound $n$ in (\ref{T2}) cannot be replaced by a larger one.
\qed

\vspace{3mm}

\noindent{\bf Remark.} In the planar case, formula (\ref{T1}) can be written as
\[
N_a(P)=\frac {V_2(P-P)}{2V_2(P)}.
\]
This can be deduced directly from (\ref{0}) as follows. The pair $(F,G)$ with $\dim F=1$, $\dim G=0$ takes part in the sum (\ref{0}) exactly when $F$ is an edge of $P$ and $-G=\{v(F)\}$ is the unique vertex opposite to $F$. Then $A(F,G)$ is the triangle ${\rm conv} (F \cup \{v(F)\})$, which is a translate of the triangle ${\rm conv}((F-v(F))\cup \{o\})$. These triangles, together with their reflections about the origin, are easily seen to form a triangulation of $P-P$. The sum of the areas of these triangles is then indeed half the area of $P-P$.

\section{Relative normals}\label{relnormal}

For treating affine diameters, we first develop some methods and results for relative normals. We do this in a slightly more general fashion than needed for the affine diameters, since it requires little additional effort and is of independent interest.

We assume that a fixed convex body $B\in{\mathcal K}^2$ with $o\in{\rm int}\,B$ is given; we call it the {\em gauge body}. For a nonempty compact set $K\subset {\mathbb R}^2$, the $B$-{\em distance of $x$ from $K$} is defined by
\begin{eqnarray*}
d(K,B,x) &=& \min\{r\ge 0: x\in K+rB\}\\
&=& \min\{r\ge 0: (-rB+x)\cap K\not=\emptyset\}.
\end{eqnarray*}
It is easy to see that $d(K,B,\cdot)$ is a convex function.

Now let $K\in{\mathcal K}^2$ be a convex body. We say that $K$ and $B$ are in {\em general relative position} if ${\rm dim}\,F(K,u)+{\rm dim}\,F(B,u)\le 1$ 
for all $u\in\mathbb{S}^1$. This is the case if and only if ${\rm dim}\,F(K+B,u)={\rm dim}\,F(K,u)+{\rm dim}\,F(B,u)$ for all $u\in\mathbb{S}^1$. Note that if this definition is applied to $K$ and $B=-K$, it is consistent with the definition given in the introduction. 
 In the rest of this section, $K$ and $B$ are fixed convex bodies which are in general relative position.

Let $x\in{\mathbb R}^2\setminus K$. Since $K$ and $B$ are in general relative position, there are a unique point $p(K,B,x)\in{\rm bd}\,K$ and a unique vector $u(K,B,x)\in{\rm bd}\,B$ such that
$$ x=p(K,B,x)+d(K,B,x)u(K,B,x).$$
We call $p(K,B,x)$ the $B$-{\em projection} of $x$ to $K$ and the vector $u(K,B,x)$ a $B$-{\em normal} of $K$ at $p(K,B,x)$.

Next we provide some Lipschitz and differentiability properties.

\begin{Lemma}\label{L4.1} The $B$-projection $p(K,B,\cdot):{\mathbb R}^2\setminus K \to{\rm bd}\,K$ is a Lipschitz map.
\end{Lemma}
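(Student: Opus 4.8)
The plan is to show that $p := p(K,B,\cdot)$ is Lipschitz on a neighborhood of $K$ (say, on $(K + B) \setminus K$, which suffices since $p$ is determined by the "nearest" portion of the boundary and we can rescale $B$) by exhibiting a uniform modulus of continuity. The key geometric fact I would exploit is that $K$ and $B$ are in general relative position, so for each $u \in \mathbb{S}^1$ one has $\dim F(K,u) + \dim F(B,u) \le 1$; this is exactly what forced uniqueness of $p(K,B,x)$ and $u(K,B,x)$ in the first place, and quantitatively it should give a uniform bound controlling how fast the $B$-projection can move.

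First I would record the basic identities: for $x \notin K$ we have $x = p(K,B,x) + d(K,B,x)\, u(K,B,x)$ with $p(K,B,x) \in \mathrm{bd}\,K$, $u(K,B,x) \in \mathrm{bd}\,B$, and $d(K,B,\cdot)$ a convex (hence locally Lipschitz) function that vanishes precisely on $K$. Fix $x_0 \notin K$ and work in a small ball around it. Since $d(K,B,\cdot)$ is convex and positive near $x_0$, it is bounded above and below there. Suppose, for contradiction, that $p$ fails to be Lipschitz: then there are sequences $x_j, x_j' \to x_*$ (for some $x_* \in \mathbb{R}^2 \setminus K$, or possibly $x_* \in \mathrm{bd}\,K$) with $\|x_j - x_j'\| \to 0$ but $\|p(x_j) - p(x_j')\|/\|x_j - x_j'\| \to \infty$. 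By compactness of $\mathrm{bd}\,K$ and $\mathrm{bd}\,B$, pass to subsequences so that $p(x_j) \to p_1$, $p(x_j') \to p_2$, $u(x_j) \to u_1$, $u(x_j') \to u_2$, and $d(K,B,x_j) \to d_1 \ge 0$, $d(K,B,x_j') \to d_2 \ge 0$. Passing to the limit in the defining identity gives $x_* = p_1 + d_1 u_1 = p_2 + d_2 u_2$ with $p_i \in \mathrm{bd}\,K$, $u_i \in \mathrm{bd}\,B$, and $d_i = d(K,B,x_*)$ by continuity of the $B$-distance. If $x_* \notin K$, the uniqueness statement already proved forces $p_1 = p_2$ and $u_1 = u_2$; if $x_* \in \mathrm{bd}\,K$ then $d_1 = d_2 = 0$ and again $p_1 = p_2 = x_*$. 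Either way $p_1 = p_2$, so the "escape" of the difference quotient must come from the first-order behavior, and this is where I need the general-position hypothesis to produce a genuine contradiction rather than a mere tie at the limit.

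To turn the limiting coincidence into a Lipschitz bound I would use a supporting-line / exposed-point argument: let $u := u(K,B,x_0)$ be the common $B$-normal direction. General relative position implies that at least one of $F(K,u)$, $F(B,u)$ is a single point; this is exactly a strict-convexity condition (in the relevant direction) for $K + B$ at the point $p + du$. A convex body that is strictly convex at a boundary point admits, near that point, a \emph{modulus of strict convexity} which translates into Lipschitz (even Hölder, a priori, but here we want Lipschitz) control of the nearest-point map. More concretely, I would write $x = p + d\,u$, perturb $x$ to $x' = x + \Delta$, and estimate $\|p(x') - p(x)\|$ by comparing supporting hyperplanes of $K + d(K,B,x')B$ at $x'$ with those of $K + dB$ at $x$; the general-position assumption guarantees these support sets shrink to points, giving the needed one-sided linear estimate. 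The main obstacle I anticipate is precisely making this quantitative step uniform: strict convexity at a point does not by itself give a \emph{linear} (Lipschitz) modulus — generically one only gets Hölder — so I expect the argument must instead proceed \emph{locally} and non-uniformly (Lipschitz on compact subsets of $\mathbb{R}^2 \setminus K$ with a constant depending on the set), and only the polytopal or piecewise-analytic structure, or a compactness argument over $\mathrm{bd}\,K$, closes the gap. I would therefore organize the proof as: (i) reduce to a neighborhood of a fixed $x_0 \notin K$; (ii) show continuity of $p$ by the subsequence argument above; (iii) upgrade to a local Lipschitz estimate via a supporting-line comparison using general relative position to kill the degenerate directions; (iv) patch the local estimates by compactness of $\mathrm{bd}\,K$. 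The delicate point—and the one I'd flag as the crux—is step (iii): ensuring the support-set degeneracy ruled out by general position yields a clean first-order (linear) bound rather than a weaker one.
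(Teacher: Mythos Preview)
Your proposal has a genuine gap at exactly the point you flag: step (iii). The sequential compactness argument does yield continuity of $p(K,B,\cdot)$, but nothing in your outline produces a \emph{linear} modulus. General relative position is a zeroth-order condition---it says certain support sets are singletons, which is what forces uniqueness of the projection---not a first-order one; it imposes no quantitative lower bound on how fast $\mathrm{bd}(K+tB)$ curves away from its supporting lines, so ``support sets shrink to points'' cannot by itself upgrade H\"older to Lipschitz. Patching local estimates by compactness over $\mathrm{bd}\,K$ does not help either, since the local Lipschitz constants you would be patching have not been shown to exist in the first place.

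The paper's argument is short, global, and uses a different mechanism entirely. One observes that the $B$-normal rays $R(x):=p(K,B,x)+[0,\infty)\bigl(x-p(K,B,x)\bigr)$ satisfy $p(K,B,z)=p(K,B,x)$ for every $z\in R(x)$, and that $R(x)\cap R(y)=\emptyset$ whenever $e:=p(K,B,x)-p(K,B,y)\neq o$; this disjointness is the \emph{only} place general relative position enters, via uniqueness of the projection. From the disjointness one reads off either $\|x-y\|\ge\|e\|$ or $\|x-y\|\ge\|e\|\sin\alpha$, where $\alpha$ is the angle between the chord $e$ and the ray direction $u(K,B,y)\in\mathrm{bd}\,B$. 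The crucial quantitative input is that $\alpha\ge\alpha_0>0$, where $\alpha_0$ is the smallest angle that any $b\in\mathrm{bd}\,B$ makes with a supporting line of $B$ at $b$; this minimum is positive simply because $o\in\mathrm{int}\,B$ (so $\langle b,u\rangle=h(B,u)$ is bounded away from zero relative to $\|b\|$). The global Lipschitz constant is $1/\sin\alpha_0$. No strict-convexity modulus, no local patching, and no analysis of the level sets $\mathrm{bd}(K+tB)$ are needed.
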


\begin{proof} Let $x,y\in{\mathbb R}^2\setminus K$. We abbreviate
$$ R(x):= p(K,B,x)+[0,\infty)(x-p(K,B,x))$$
and define the ray $R(y)$ similarly. Note that all points $z\in R(x)$ satisfy $p(K,B,z)= p(K,B,x)$. We may assume that $e:= p(K,B,x)-p(K,B,y)\not= o$, since otherwise relation (\ref{4.2}) is trivial. Then it follows that
$$
R(x)\cap R(y)=\emptyset.
$$
Therefore, we have either
\begin{equation}\label{4.2}
\|x-y\| \ge \|e\|
\end{equation}
or
$$ \|x-y\| \ge \|p(K,B,x)-\bar y\| = \|e\|\sin\alpha,$$
where $\bar y$ denotes the orthogonal projection of $p(K,B,x)$ to the ray $R(y)$ and $\alpha$ is the angle between the vectors $e$ and $y-p(K,B,y)$. This angle satisfies $\alpha \ge \alpha_0>0$, where $\alpha_0$ is the smallest angle that a vector $b\in {\rm bd}\,B$ can form with a supporting line of $B$ at $b$, which is clearly positive. It follows that
\begin{equation}\label{4.3}
\|x-y\| \ge \|e\|\sin\alpha_0.
\end{equation}
By (\ref{4.2}) and (\ref{4.3}), the Lipschitz continuity of $p(K,B,\cdot)$ is established. \end{proof}

\begin{Lemma}\label{Lemma4.2}
Let $x\in \R^2\setminus K$, and set $t:=d(K,B,x)$. Then $d(K,B,\cdot)$ is differentiable at $x$ if and only if $x$ is a regular boundary point of $K+tB$.
\end{Lemma}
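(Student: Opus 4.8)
The plan is to use that $d(K,B,\cdot)$ is a finite convex function on $\R^2$, hence differentiable at $x$ if and only if its subdifferential $\partial d(K,B,x)$ is a single point, and to identify this subdifferential in terms of the normal cone $N(K+tB,x)$. Two facts will be used throughout. First, since $o\in\interior B$ we have $h(B,u)>0$ for all $u\in\mathbb{S}^1$, and from $y\in K+d(K,B,y)B$ and the description of Minkowski sums by support functions (\cite{Sch14}) one obtains, for every $y\in\R^2$ and every $u\in\mathbb{S}^1$,
\[
d(K,B,y)\ \ge\ \frac{\langle u,y\rangle-h(K,u)}{h(B,u)}\,,
\]
with equality, for $y\notin K$, exactly for the $u$ attaining the maximum in $d(K,B,y)=\max_{u\in\mathbb{S}^1}\bigl(\langle u,y\rangle-h(K,u)\bigr)/h(B,u)$; this representation is valid for all $y\notin K$, hence on a neighborhood of $x$. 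Second, $x\in\bd(K+tB)$: clearly $x\in K+tB$, and $x\in\interior(K+tB)$ is impossible by convexity of $d(K,B,\cdot)$ along a ray issuing from a point of $K$ through $x$.

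Next I would record the following. (a) If $g\in\partial d(K,B,x)$ then $\langle g,y-x\rangle\le d(K,B,y)-t\le 0$ for all $y\in K+tB$, so $g\in N(K+tB,x)$; hence $\partial d(K,B,x)\subseteq N(K+tB,x)$. (b) If $u\in N(K+tB,x)\cap\mathbb{S}^1$, then, using $h(K+tB,u)=h(K,u)+t\,h(B,u)$ and $\langle u,x\rangle=h(K+tB,u)$,
\[
\Bigl\langle\tfrac{u}{h(B,u)},\,y-x\Bigr\rangle=\frac{\langle u,y\rangle-h(K,u)}{h(B,u)}-t\ \le\ d(K,B,y)-t
\]
for all $y\in\R^2$, so $u/h(B,u)\in\partial d(K,B,x)$. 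Moreover $o\notin\partial d(K,B,x)$, since any $q\in K$ has $d(K,B,q)=0<t$, violating the subgradient inequality at $g=o$. Thus $\partial d(K,B,x)$ is a compact convex subset of the cone $N(K+tB,x)$ which meets every extreme ray of that cone and has the same positive hull as it.

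With these preparations, one implication is almost immediate: if $d(K,B,\cdot)$ is differentiable at $x$, then $\partial d(K,B,x)=\{g\}$ with $g\ne o$, so $N(K+tB,x)=\{\lambda g:\lambda\ge 0\}$ is one-dimensional, i.e.\ $x$ is a regular boundary point of $K+tB$. For the converse, suppose $x$ is a regular boundary point of $K+tB$, with outer unit normal $u_0$, so $N(K+tB,x)=\{\lambda u_0:\lambda\ge 0\}$. Then (a) gives $\partial d(K,B,x)\subseteq\{\lambda u_0:\lambda\ge 0\}$ and (b) gives $u_0/h(B,u_0)\in\partial d(K,B,x)$, so $\partial d(K,B,x)=\{\lambda u_0:\alpha\le\lambda\le\beta\}$ for some $0<\alpha\le h(B,u_0)^{-1}\le\beta$. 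The hard part is to show $\alpha=\beta$ --- equivalently, that the restriction of $d(K,B,\cdot)$ to the line $x+\R u_0$ is differentiable at $x$, equivalently $d(K,B,x+su_0)=t+s\,h(B,u_0)^{-1}+o(s)$ as $s\to0$.

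To close this gap I would use the max-representation together with a compactness estimate. Since $x$ is regular, among unit vectors the equality $\langle u,x\rangle=h(K+tB,u)$ holds only for $u=u_0$, so $\bigl(\langle u,x\rangle-h(K,u)\bigr)/h(B,u)<t$ for $u\ne u_0$; by compactness of $\mathbb{S}^1$, for each $\varepsilon>0$ this quantity is $\le t-\delta$ (some $\delta=\delta(\varepsilon)>0$) whenever $\|u-u_0\|\ge\varepsilon$. Consequently, for $y$ close enough to $x$ the maximum defining $d(K,B,y)$ is attained at some $u$ with $\|u-u_0\|<\varepsilon$, and for such $u$
\[
\frac{\langle u,y\rangle-h(K,u)}{h(B,u)}\ \le\ t+\Bigl\langle\tfrac{u_0}{h(B,u_0)},\,y-x\Bigr\rangle+\omega(\varepsilon)\,\|y-x\|,
\]
where $\omega(\varepsilon)\to0$ as $\varepsilon\to0$ by continuity of $u\mapsto u/h(B,u)$. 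Together with the lower bound from (b), letting first $\|y-x\|\to0$ and then $\varepsilon\to0$ yields that $d(K,B,\cdot)$ is differentiable at $x$ with gradient $u_0/h(B,u_0)$. (This last step is just the special case --- maximum of affine functions over a compact parameter set --- of Danskin's theorem on subdifferentials of maximum functions.) I expect this estimate to be the main obstacle: steps (a) and (b) only localize the subdifferential to a segment on the ray $\R_{\ge0}u_0$, and excluding a nondegenerate segment requires the argument above; a mild subtlety is that the support set $F(K+tB,u_0)$ may itself be a nondegenerate segment, but this is harmless, since regularity of $x$ still forces the active set in the max-representation to be the single direction $u_0$.
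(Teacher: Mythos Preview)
Your argument is correct and complete, but it follows a genuinely different route from the paper. The paper's proof is direct and geometric: it parametrizes $\bd(K+tB)$ locally near $x$ as the graph of a convex function $f$ over a supporting line, and for the forward implication argues by contradiction that a one-sided kink of $f$ at $x$ would force $\dif d(K,B,x)=0$, contradicting the obvious fact that the directional derivative along $b=u(K,B,x)$ equals $1$; for the converse it uses the Lipschitz estimate $|d(K,B,x)-d(K,B,y)|\le r_0^{-1}\|y-x\|$ (from $r_0B^2\subset B$) to show that partial derivatives of $d(K,B,\cdot)$ exist at $x$ in two independent directions, and then appeals to the standard fact that a convex function with existing partials is differentiable. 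Your approach instead exploits the representation $d(K,B,y)=\max_{u\in\mathbb{S}^1}(\langle u,y\rangle-h(K,u))/h(B,u)$ for $y\notin K$ and reduces everything to convex analysis: the inclusion $\partial d(K,B,x)\subset N(K+tB,x)$, the explicit subgradients $u/h(B,u)$, and a Danskin-type envelope argument pinning the subdifferential to the single point $u_0/h(B,u_0)$ when $x$ is regular. Your route is arguably cleaner and yields the gradient explicitly; it also works verbatim in $\R^n$ (the paper's statement and proof are in $\R^2$ only because that is where Lemma~\ref{L4.1} is available, but the content of the present lemma is dimension-free). The paper's approach, on the other hand, is more self-contained and does not invoke subdifferential calculus or an envelope theorem.
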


\begin{proof} We introduce the following notation, for both directions of the proof. There is a unique vector $b\in{\rm bd}\,B$ such that $x-tb\in {\rm bd}\,K$. Further, since $x\in {\rm bd}\,(K+t{B})$, there is some vector $u\in N(K+tB,x)\cap \Sp^1$ such that $(x+\R\, u)\cap \interior(K+tB)\neq \emptyset$. Note that $d(K,B,y)=t$ for all $y\in \bd(K+tB)$. There exists a nonnegative convex function $f$ such that
$$ \gamma_{v}(s):= x+sv-f(x+sv)u\in {\rm bd}\,(K+tB)$$
if $v\in u^{\perp}\cap \Sp^1$ and $|s|$ is small enough. The convexity of
$f$ implies the existence of the limit
$$ \gamma_{v}^{\prime}(0;1):=\lim_{s\downarrow 0}\frac{\gamma_{v}(s)-\gamma_{v}(0)}{s}=v-f^{\prime}(x;v)u\,.$$

Let us assume now that $d(K,B,\cdot)$ is differentiable at $x$. Since $d(K,B,\cdot)\circ \gamma_{v}(s)=t$, for $|s|$ sufficiently small, we obtain that
\begin{equation}\label{g2}
\dif d(K,B,x)(v-f^{\prime}(x;v)u)=0
\end{equation}
for $v\in u^{\perp}\cap \Sp^1$. Suppose that $f^{\prime}(x;v)\neq 0$, and hence $f^{\prime}(x;v)>0$, for some $v\in u^{\perp}\cap \Sp^1$. Then the vectors $a_1:= v-f'(x;v)u$ and $a_2:= -v-f'(x;-v)u$ are linearly independent, and therefore (\ref{g2}) implies that $\dif d(K,B,x)= 0$. This contradicts the fact that the directional derivative of $g:=d(K,B,\cdot)$ satisfies $g'(x;b)=1$. Therefore, $f^{\prime}(x;v)= 0$ for $v\in u^{\perp}$, hence $x$ is a regular boundary point of $K+tB$.

Conversely, assume that $x$ is a regular boundary point of $K+t{B}$. Let $v\in u^{\perp}\cap \Sp^1$. We have $d(K,B,x)= d(K,B,\gamma_v(s))=t$. There is a number $r_0>0$ with $r_0B^2\subset B$, and this implies that 
$$|d(K,B,x)-d(K,B,y)|\le r_0^{-1}\|y-x\|.$$ 
Hence,
for $|s|>0$ sufficiently small, we get
\begin{align*}
&\left|\frac{d(K,B,x+sv)-d(K,B,x)}{s}\right|=\left|\frac{d(K,B,x+sv)-d(K,B,\gamma_v(s))}{s}\right| \\
& \le \frac{1}{r_0}\left\|\frac{x+sv-\gamma_v(s)}{s}\right\|
= \frac{1}{r_0}\left|\frac{f(x+sv)}{s}\right|.
\end{align*}
Since $f$ is differentiable at $x$ and $f(x)=0$, this yields that the partial derivatives of the convex function $d(K,B,\cdot)|_{(x+u^{\perp})}$ at $x$ exist. Also $d(K,B,\cdot)|_{(x+\R\, b)}$ is differentiable at $x$, obviously.
Thus, the convex function
$$ (\alpha_1,\alpha_2)\mapsto d\left(K,B,x+\alpha_1 v+\alpha_2 b\right),$$
$(\alpha_1,\alpha_2)$ sufficiently close to $(0,0)$, has partial derivatives at $o$. Hence, by \cite[Theorem 1.5.8]{Sch14}, it is differentiable at $o$. This implies that $d(K,B,\cdot)$ is differentiable at $x$.
\end{proof}

Since $d(K,B,\cdot)$ is convex and therefore differentiable almost everywhere, almost every point $x\in{\mathbb R}^2\setminus K$ is a regular boundary point of $K+d(K,B,x)B$.

\vspace{3mm}

For $\lambda>0$ we consider the map
$$ h_{\lambda}:\R^2\setminus K\to \R^2\setminus K,\quad y\mapsto p(K,B,y)+\lambda(y-p(K,B,y)).$$
Obviously $(h_{\lambda})^{-1}=h_{\lambda^{-1}}$, and  we have $h_{\lambda}(\bd(K+t{B}))=\bd(K+t\lambda B)$ for $t>0$. By Lemma \ref{L4.1}, $h_{\lambda}$ is a bi-Lipschitz map. But then $\dif h_{\lambda}(y)$ has rank two and $(h_{\lambda})^{-1}$ is differentiable at $h_{\lambda}(y)$ if $h_{\lambda}$ is differentiable at $y\in \R^2\setminus K$ (see the proof of Theorem 3.2 in \cite{Walter}). This implies that $p(K,B,\cdot)$ is differentiable at $y\in \R^2\setminus K$ if and only if $p(K,B,\cdot)$ is differentiable at $\bar{y}$ for any $\bar{y}\in R(y)$. The same is true for $d(K,B,\cdot)$, as follows from  Lemma \ref{Lemma4.2}
and the relation 
\begin{equation}\label{Nor}
N(K+tB,x+tb)=N(K,x)\cap N(B,b)
\end{equation} 
(see \cite[Theorem 2.2.1(a)]{Sch14}).

We define $\DKB$ as the set of all $y\in \R^2\setminus K$ such that $p(K,B,\cdot)$ and $d(K,B,\cdot)$ are differentiable at $y$, and hence  at any point of $R(y)\setminus\{y\}$. Then Lemma \ref{L4.1} and Lemma \ref{Lemma4.2} yield that $\mathcal{H}^2(\R^2\setminus(K\cup \DKB))=0$. Since $d(K,B,\cdot)$ is Lipschitz, the coarea formula yields
\begin{eqnarray*}
0 &=& \int_{\R^2 \setminus(K\cup {\mathcal D}_{K,B})} J_1 d(K,B,x)\,\Hc^2(\D x)\\
&=& \int_0^\infty \Hc^1({\rm bd}(K+tB)\setminus {\mathcal D}_{K,B})\,\D t.
\end{eqnarray*}
Let $t_0>0$ be such that $\Hc^1({\rm bd}(K+t_0B)\setminus {\mathcal D}_{K,B})=0$. Let $t>0$. The bi-Lipschitz map $h_{t/t_0}$ maps
${\rm bd}(K+t_0B)\setminus {\mathcal D}_{K,B}$ onto ${\rm bd}(K+tB)\setminus {\mathcal D}_{K,B}$, hence we conclude that
$\Hc^1({\rm bd}(K+tB)\setminus {\mathcal D}_{K,B})=0$ for all $t>0$. 

Our next aim is to introduce generalized relative curvatures on a generalized normal bundle, partly following \cite{Hug99}. The $n$-dimensional Euclidean case of this notion (that is, with $B$ replaced by $B^n$) is sketched in \cite[Section 2.6]{Sch14}.

Choose $y\in \DKB$ and set $t:=d(K,B,y)$. The differential $\dif u(K,B,y)$ of $u(K,B,\cdot)$ at $y$ exists. Let $u:=u_{K+tB}(y)$ be the unique Euclidean outer unit normal vector of $K+tB$ at $y$, and let $v\in u^{\perp}\cap \Sp^1$. We can choose an injective, continuous mapping $\gamma: (-\varepsilon,\varepsilon)\to\bd(K+tB)$ with the properties that $\gamma(0)=y$ and $\gamma$ is differentiable at $0$ with $\gamma^{\prime}(0)=v$. Then $u(K,B,\cdot)\circ\gamma$ maps $(-\varepsilon,\varepsilon)$ into $\bd B$ and is differentiable at $0$, hence the vector $w:= (u(K,B,\cdot)\circ\gamma)'(0)$ exists. Since, by (\ref{Nor}), $u$ is an outer normal vector of $B$ at $u(K,B,y)$, we have $\langle u,w\rangle = 0$. Moreover, since
$$ v=\lim_{s\downarrow 0} \frac{\gamma(s)-y}{s},$$
there is a decreasing null sequence $(s_i)_{i\in{\mathbb N}}$ with $\langle \gamma(s_i)-y,v\rangle >0$ and such that the points $\gamma(s_i)$ are regular boundary points of $K+tB$. Let $u_i$ be the Euclidean outer unit normal vector of $K+tB$ at $\gamma(s_i)$. Then $u_i$ is also an outer unit normal vector of $B$ at $u(K,B,\gamma(s_i))$, as follows from (\ref{Nor}), applied to $x',b'$ with $x'+tb'=\gamma(s_i)$. Since $\langle u_i,v \rangle\ge 0$ (for sufficiently small $s_i$), it follows that
$$ \langle u(K,B,\gamma(s_i))-u(K,B,y),v\rangle \ge 0$$
and hence that $\langle w,v\rangle \ge 0$. Altogether, we have $\langle w,u\rangle =0$ and $\langle w,v\rangle\ge 0$, hence there exists a number $k(K,B,y)\ge 0$ with
$$ \dif u(K,B,y)(v)=k(K,B,y)v.$$
Thus, we have established the following lemma.

\begin{Lemma}\label{Lemma6.2}
Let $y\in \DKB$, and set $u:=u_{K+d(K,\,B,\,y)B}(y)$. Then there is a number $k(K,B,y)\geq 0$ such that $\dif u(K,B,y)(v)= k(K,B,y)v$ for all $v\in u^{\perp}$.
\end{Lemma}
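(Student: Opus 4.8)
The plan is to fix one direction and reduce the assertion to a scalar monotonicity inequality. Since $u\in\Sp^1$ and the ambient space is $\R^2$, the orthogonal complement $u^{\perp}$ is a line, so it suffices to produce, for one fixed $v\in u^{\perp}\cap\Sp^1$, a number $k\ge 0$ with $\dif u(K,B,y)(v)=kv$; the direction $-v$, and hence all of $u^{\perp}$, is then covered by linearity. First I note that $u(K,B,\cdot)$ is differentiable at $y$: from $x=p(K,B,x)+d(K,B,x)\,u(K,B,x)$ we get $u(K,B,x)=\bigl(x-p(K,B,x)\bigr)/d(K,B,x)$ for $x\notin K$, and since $y\in\DKB$ both $p(K,B,\cdot)$ and $d(K,B,\cdot)$ are differentiable at $y$ while $d(K,B,y)=t>0$, so the quotient rule applies.

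Next I introduce a boundary curve through $y$. By Lemma~\ref{Lemma4.2}, $y$ is a regular boundary point of $K+tB$, so in a neighbourhood of $y$ the set $\bd(K+tB)$ is a graph $\gamma(s)=y+sv-f(s)u$ over the tangent line $y+u^{\perp}$, where $f\ge 0$ is convex with $f(0)=0$; uniqueness of the supporting line at $y$ forces $f$ to be differentiable at $0$ with $f'(0)=0$. Hence $\gamma$ is continuous and injective on some $(-\varepsilon,\varepsilon)$, differentiable at $0$ with $\gamma'(0)=v$, and $d(K,B,\gamma(s))=t$ throughout. By the chain rule $w:=\bigl(u(K,B,\cdot)\circ\gamma\bigr)'(0)=\dif u(K,B,y)(v)$ exists. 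Since $u(K,B,\gamma(s))\in\bd B$ for all $s$ and, by~(\ref{Nor}), $u$ is an outer normal vector of $B$ at $u(K,B,y)=u(K,B,\gamma(0))$, the differentiable function $s\mapsto\langle u,u(K,B,\gamma(s))\rangle$ has a maximum at $s=0$, so its derivative vanishes there; that is, $\langle w,u\rangle=0$, and therefore $w=kv$ for some $k\in\R$. It remains to show $k=\langle w,v\rangle\ge 0$.

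For this I approach $y$ along $\bd(K+tB)$ through regular points. As the non-regular boundary points of $K+tB$ form a countable set, I choose a decreasing null sequence $(s_i)$ with $\gamma(s_i)$ a regular boundary point; from $\gamma(s)-y=sv+o(s)$ we have $\langle\gamma(s_i)-y,v\rangle>0$ for large $i$. Put $u_i:=u_{K+tB}(\gamma(s_i))$; applying~(\ref{Nor}) to $\gamma(s_i)=p(K,B,\gamma(s_i))+t\,u(K,B,\gamma(s_i))$ shows $u_i$ is also an outer normal vector of $B$ at $u(K,B,\gamma(s_i))$, and $u_i\to u$ since $y$ is a regular boundary point of $K+tB$, so $\langle u_i,u\rangle>0$ for large $i$. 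Writing $\gamma(s_i)-y=p_iv+q_iu$, the inclusions $u\in N(K+tB,y)$ and $u_i\in N(K+tB,\gamma(s_i))$ give $q_i\le 0$ and $p_i\langle u_i,v\rangle+q_i\langle u_i,u\rangle\ge 0$, whence $\langle u_i,v\rangle\ge 0$ (using $p_i>0$). Writing $u(K,B,\gamma(s_i))-u(K,B,y)=a_iv+b_iu$, the supporting lines of $B$ at $u(K,B,y)$ and at $u(K,B,\gamma(s_i))$ give $b_i\le 0$ and $a_i\langle u_i,v\rangle+b_i\langle u_i,u\rangle\ge 0$, forcing $a_i\ge 0$ for large $i$; in the borderline case $\langle u_i,v\rangle=0$ one has $u_i=u$, so $y$ and $\gamma(s_i)$ lie in $F(K+tB,u)=F(K,u)+tF(B,u)$, which by general relative position is a segment on which $u(K,B,\cdot)$ is either constant or linear, again giving $a_i\ge 0$. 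Dividing by $s_i$ and letting $i\to\infty$ yields $\langle w,v\rangle=\lim_{i}a_i/s_i\ge 0$, completing the proof.

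The genuinely substantive step is the last one: converting differentiability into the sign condition $k\ge 0$. Existence of the differential and the relation $\langle w,u\rangle=0$ are soft; the inequality $\langle w,v\rangle\ge 0$ is a monotonicity property that must be squeezed out of the convexity of $K+tB$ and of $B$ via the normal cone identity~(\ref{Nor}), and the one delicate point is that $\bd(K+tB)$ need not be strictly convex near $y$, which is what makes the borderline case necessary.
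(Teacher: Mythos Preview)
Your proof is correct and follows essentially the same route as the paper's: parametrize $\bd(K+tB)$ near $y$ by a curve $\gamma$ with $\gamma'(0)=v$, use that $u$ is an outer normal of $B$ at $u(K,B,y)$ to get $\langle w,u\rangle=0$, and then approach $y$ along regular boundary points $\gamma(s_i)$ to extract $\langle w,v\rangle\ge 0$ from the normal-cone inequalities at $y$ and $\gamma(s_i)$ for both $K+tB$ and $B$. You supply more detail than the paper at the key inequality step---in particular, you make the decomposition $u(K,B,\gamma(s_i))-u(K,B,y)=a_iv+b_iu$ explicit and handle the borderline case $\langle u_i,v\rangle=0$ (where $u_i=u$ and the face $F(K+tB,u)$ is a segment) that the paper passes over; this extra care is justified, since without it the implication ``$\langle u_i,v\rangle\ge 0\Rightarrow a_i\ge 0$'' is not immediate.
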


Now, for $y\in \DKB$ and $0<s<d(K,B,y)$, we have
\[y-su(K,B,y)\in
\bd(K+(d(K,B,y)-s)B)\]
and
\begin{equation}\label{g6.1}
u(K,B,y-su(K,B,y))=u(K,B,y).
\end{equation}
Choosing $v\in u^{\perp}\setminus\{o\}$, where
$$ u:=u_{K+d(K,\,B,\,y)B}(y)=u_{K+(d(K,\,B\,,y)-s)B}(y-su(K,B,y)),$$
we
obtain from (\ref{g6.1}) that
\begin{equation}\label{g6.2}
(1-sk(K,B,y))\dif u(K,B,y-su(K,B,y))(v)=k(K,B,y)v.
\end{equation}
Moreover, we have
\begin{equation}\label{g6.3}
\dif u(K,B,y-su(K,B,y))(v)=k(K,B,y-su(K,B,y))v.
\end{equation}
From (\ref{g6.2}) and (\ref{g6.3}) it follows that $k(K,B,y)<1/s$ and
\begin{equation}\label{g6.4}
k(K,B,y-su(K,B,y))=\frac{k(K,B,y)}{1-sk(K,B,y)}.
\end{equation}
Hence we get
\[0\leq k(K,B,y)\leq d(K,B,y)^{-1}.\]
Using (\ref{g6.4}), we see that
$k(K,B,y)=d(K,B,y)^{-1}$ implies
\[k(K,B,y-su(K,B,y))=d(K,B,y-su(K,B,y))^{-1}.\]
Furthermore,
$k(K,B,y)<d(K,B,y)^{-1}$ yields
\begin{align*}
&\frac{k(K,B,y)}{1-d(K,B,y)k(K,B,y)}\\[1mm]
&=\frac{k(K,B,y-su(K,B,y))}{1-d(K,B,y-su(K,B,y)) k(K,B,y-su(K,B,y))}.
\end{align*}

\vspace{2mm}

Before we summarize the obtained results in the next lemma, we define
$$ \nbk := \{(p(K,B,x),u(K,B,x))\in {\rm bd}\,K\times {\rm bd}\,B :x\in {\rm bd}(K+tB)\},$$
which is independent of $t>0$.
The set $\nbk$ is called the $B$-{\em normal bundle} of $K$. For fixed $t>0$, the mappings
\[F:\nbk\to\bd(K+tB),\quad (x,b)\mapsto x+tb,\]
and
\[F^{-1}:\bd(K+tB)\to \nbk,\quad
y\mapsto(p(K,B,y),u(K,B,y)),\]
are Lipschitz maps which are inverse to each other. In particular, this shows
that $\nbk$ is a closed, $1$-rectifiable subset of $\R^2\times \R^2$.

\begin{Lemma}\label{Lemma6.3}
Let $(x,b)\in \nbk$ be such that $y:=x+tb\in \DKB$ for some (and hence for all) $t>0$. Then
$$ k(K,B;x,b):=\frac{k(K,B,y)}{1-tk(K,B,y)}\in [0,\infty] $$
is independent of the particular choice of $t>0$. Moreover, $k(K,B;x,b)$ is defined for $\mathcal{H}^1$ almost all $(x,b)\in\nbk$, and $k(K,B;x,b)=\infty$ if and only if $k(K,B,y)=1/t$.
\end{Lemma}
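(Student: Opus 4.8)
The plan is to obtain all three assertions directly from relations already derived for $k(K,B,\cdot)$: the transformation rule (\ref{g6.4}), the bound $0\le k(K,B,y)\le d(K,B,y)^{-1}$, and the differentiability and measure properties of $\DKB$ established above.

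First I would verify that $k(K,B;x,b)$ is well defined and independent of $t$. Fix $(x,b)\in\nbk$ and assume $x+t_0b\in\DKB$ for some $t_0>0$. Since $\DKB$ was defined so that differentiability of $p(K,B,\cdot)$ and $d(K,B,\cdot)$ at a point holds simultaneously along the whole ray $R(\cdot)$, the point $y_t:=x+tb$ lies in $\DKB$ for every $t>0$, so $a_t:=k(K,B,y_t)$ exists by Lemma \ref{Lemma6.2}. Given $0<t_1<t_2$, we have $y_{t_1}=y_{t_2}-(t_2-t_1)u(K,B,y_{t_2})$ with $0<t_2-t_1<t_2=d(K,B,y_{t_2})$, so (\ref{g6.4}) applied at $y_{t_2}$ gives $a_{t_1}=a_{t_2}/(1-(t_2-t_1)a_{t_2})$, the denominator being positive by the bound. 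Substituting this into $a_{t_1}/(1-t_1a_{t_1})$ and simplifying yields
$$\frac{a_{t_1}}{1-t_1a_{t_1}}=\frac{a_{t_2}}{1-(t_2-t_1)a_{t_2}-t_1a_{t_2}}=\frac{a_{t_2}}{1-t_2a_{t_2}},$$
and the denominators $1-t_1a_{t_1}$ and $1-t_2a_{t_2}$ are seen to vanish simultaneously (consistently with the discussion following (\ref{g6.4})), with the convention that the quotient then equals $\infty$. Hence $k(K,B;x,b)=a_t/(1-ta_t)\in[0,\infty]$ is independent of $t$.

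The characterization of the value $\infty$ is then immediate: writing $a:=k(K,B,y_t)$, the bound gives $0\le a\le t^{-1}$, and $a/(1-ta)$ is finite when $a<t^{-1}$ but equals $\infty$ (numerator $t^{-1}>0$ over denominator $0$) when $a=t^{-1}$; so $k(K,B;x,b)=\infty$ if and only if $k(K,B,y_t)=1/t$. For the almost-everywhere statement, the first step shows that $k(K,B;x,b)$ is defined precisely when $x+b\in\DKB$, so the exceptional set is $F^{-1}(\bd(K+B)\setminus\DKB)$, the image under the Lipschitz map $F^{-1}$ of a set of $\mathcal H^1$-measure zero (proved earlier in this section). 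Since Lipschitz maps do not increase $\mathcal H^1$-measure, this set is $\mathcal H^1$-null, as claimed.

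I do not expect any step to be a serious obstacle: the lemma mainly repackages facts established above. The only places needing a little care are the elementary bookkeeping in the second paragraph that keeps the $\infty$-convention consistent — equivalently, the verification that the two denominators vanish together — and the remark that the bi-Lipschitz correspondence $F$ between $\bd(K+tB)$ and $\nbk$ transports the known null set to $\nbk$.
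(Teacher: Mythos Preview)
Your proposal is correct and follows the same route as the paper: the lemma is stated there as a summary of the preceding discussion, and your argument simply spells out how (\ref{g6.4}), the bound $0\le k(K,B,y)\le d(K,B,y)^{-1}$, and the established fact $\Hc^1(\bd(K+tB)\setminus\DKB)=0$ combine to give the three assertions. The only addition is that you make the transport of the null set via the Lipschitz map $F^{-1}$ explicit, which the paper leaves implicit.
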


Next, we express the (Euclidean) first-order area measures of $K$ and $B$ in terms of generalized curvatures. As a preparation, we describe the tangent space of the $B$-normal bundle in terms of these generalized curvatures.

Let $y\in \DKB\cap \bd(K+tB)$, $t>0$, and $(x,b):=F^{-1}(y)$. Then, if $u:=u(x,b):=u_{K+tB}(y)$ and $v\in {\mathbb S}^1\cap u^{\perp}$, we obtain, recalling that $p(K,B,y)=y-tu(K,B,y)$,
\begin{align}
&\text{Tan}^1\left(\nbk,(x,b)\right)=\dif F^{-1}(y)(u^{\perp})\nonumber\\
&\quad =\text{lin}\left\{\left(\dif p(K,B,y)(v),\dif u(K,B,y)(v)\right)\right\}\nonumber\\
&\quad =\text{lin}\left\{\left((1-tk(K,B,y))v,k(K,B,y)v\right)\right\}\nonumber\\
&\quad
=\text{lin}\left\{\left(\frac{1}{\sqrt{1+k(K,B;x,b)^2}}\,v,\frac{k(K,B;x,b)}
{\sqrt{1+k(K,B;x,b)^2}}\,v\right)\right\}.\label{Tanspace}
\end{align}
Here, we had to distinguish the cases $k(K,B,y)<1/t$ and $k(K,B,y)=1/t$. These facts are used in the proof of the following lemma and in the next section.

We recall that $S_1(K,\cdot)$ denotes the first-order area measure of a convex body (see \cite[Section 4.2]{Sch14}); in particular, in the plane it is the length measure (see also \cite[Subsection 8.3.1]{Sch14}).

\begin{Lemma}\label{Lemma4.5} Let $\omega\subset\Sp^1$ be a Borel set. Then
\begin{align*}
S_1(K,\omega)&=\int_{\nor(K,B)}\I\{u(x,b)\in \omega\}\frac{1}{\sqrt{1+k(K,B;x,b)^2}}\,\mathcal{H}^1(\D(x,b))\\
\intertext{and}
S_1(B,\omega)&=\int_{\nor(K,B)}\I\{u(x,b)\in \omega\}\frac{k(K,B;x,b)}{\sqrt{1+k(K,B;x,b)^2}}\,\mathcal{H}^1(\D(x,b)).
\end{align*}
\end{Lemma}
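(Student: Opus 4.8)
The plan is to compute $S_1(K,\omega)$ by pushing the length measure on $\bd(K+tB)$ forward (via $F^{-1}$) to the normal bundle, letting $t\downarrow 0$, and keeping careful track of the Jacobian factors that the tangent space description \eqref{Tanspace} provides. First I would recall that, by definition of the first-order area measure in the plane (see \cite[Section 4.2, Subsection 8.3.1]{Sch14}), for a convex body $L$ and a Borel set $\omega\subset\Sp^1$ one has $S_1(L,\omega)=\mathcal{H}^1(\{x\in\bd L:\text{some outer normal of }L\text{ at }x\text{ lies in }\omega\})$, and in particular $S_1(K+tB,\omega)=S_1(K,\omega)+tS_1(B,\omega)$ by the additivity of $S_1$ in the plane (it is a mixed/length measure). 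The two displayed formulas together, upon multiplying the second by $t$ and adding, must reproduce the length-measure identity on $\bd(K+tB)$; so it suffices to prove the single identity
\[
\mathcal{H}^1\big(\{y\in\bd(K+tB):u_{K+tB}(y)\in\omega\}\big)
=\int_{\nbk}\I\{u(x,b)\in\omega\}\,\frac{\sqrt{1+(tk(K,B;x,b))^2/(1+\dots)}}{\dots}\,\mathcal{H}^1(\D(x,b)),
\]
and then differentiate in $t$ at $t=0$; but it is cleaner to apply the area formula for rectifiable sets directly to the bi-Lipschitz map $F:\nbk\to\bd(K+tB)$.

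Concretely, the key steps are: (i) For a Borel set $\omega\subset\Sp^1$, write $S_1(K,\omega)=\lim_{t\downarrow 0}t^{-1}\big(S_1(K+tB,\omega')-S_1(K,\omega')\big)$ only if that route is taken; more directly, I would instead identify $S_1(K,\cdot)$ with the image under $y\mapsto p(K,B,y)$ of the length measure on $\bd(K+tB)$ restricted to the normal-direction condition, using that $p(K,B,\cdot)$ maps $\bd(K+tB)$ onto $\bd K$ bijectively off an $\mathcal{H}^1$-null set (which was established just before Lemma \ref{Lemma6.3}, namely $\mathcal{H}^1(\bd(K+tB)\setminus\DKB)=0$). (ii) Apply the area formula to $F^{-1}:\bd(K+tB)\to\nbk$ and to the further Lipschitz map $(x,b)\mapsto x$ from $\nbk$ to $\bd K$; by \eqref{Tanspace} the unit tangent vector to $\nbk$ at $(x,b)$ is
\[
\tau(x,b)=\frac{1}{\sqrt{1+k(K,B;x,b)^2}}\,(v,\,k(K,B;x,b)\,v),
\]
so the $1$-dimensional Jacobian of $(x,b)\mapsto x$ along $\nbk$ equals $\big(1+k(K,B;x,b)^2\big)^{-1/2}$ and that of $(x,b)\mapsto b$ equals $k(K,B;x,b)\big(1+k(K,B;x,b)^2\big)^{-1/2}$. (iii) Because $u(x,b)$ is the Euclidean outer unit normal of $K$ at $x$ (by \eqref{Nor}, $u\in N(K,x)$), and $S_1(K,\cdot)$ is precisely the push-forward of $\mathcal{H}^1\llcorner\bd K$ under the Gauss map $x\mapsto u_K(x)$ (valid $\mathcal{H}^1$-a.e. since smoothness fails only on a null set), the change-of-variables chain $\bd K \leftarrow \nbk \to \Sp^1$ yields $S_1(K,\omega)=\int_{\nbk}\I\{u(x,b)\in\omega\}(1+k(K,B;x,b)^2)^{-1/2}\,\mathcal{H}^1(\D(x,b))$, and the analogous computation with $x\mapsto b$ and the fact that $u(x,b)$ is also the outer normal of $B$ at $b$ gives the second formula.

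The main obstacle is step (iii): one must justify that integrating against the tangential Jacobian of the projection $(x,b)\mapsto x$ along the rectifiable set $\nbk$ really reproduces $S_1(K,\cdot)$ and not some other boundary measure. This requires (a) handling the $\mathcal{H}^1$-null exceptional set where $p(K,B,\cdot)$, $u(K,B,\cdot)$, or the Gauss maps of $K$ and $B$ fail to be differentiable — here one invokes $\mathcal{H}^1(\bd(K+tB)\setminus\DKB)=0$, the bi-Lipschitz invariance under the maps $h_\lambda$, and that a planar convex body is smooth and strictly convex off a countable, hence $\mathcal{H}^1$-null, set; (b) the degenerate direction $k(K,B;x,b)=\infty$, which by Lemma \ref{Lemma6.3} corresponds to $k(K,B,y)=1/t$ and where the second component of $\tau(x,b)$ dominates — there the factor $(1+k^2)^{-1/2}$ is interpreted as $0$ and $k(1+k^2)^{-1/2}$ as $1$, consistent with the last line of \eqref{Tanspace}; and (c) verifying that the pushed-forward measure is countably additive and agrees with $S_1(K,\cdot)$ on, say, all spherical arcs, which pins it down uniquely. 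Once these measure-theoretic points are in place, the two formulas follow by applying the coarea/area formula exactly as in the computation preceding \eqref{Tanspace}.
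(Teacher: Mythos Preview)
Your overall strategy coincides with the paper's: apply the coarea formula to the Lipschitz projections $\Pi_1:(x,b)\mapsto x$ and $\Pi_2:(x,b)\mapsto b$ from $\nbk$, read off the approximate Jacobians $(1+k^2)^{-1/2}$ and $k(1+k^2)^{-1/2}$ from the tangent-space description \eqref{Tanspace}, and identify the resulting boundary integrals with $S_1(K,\cdot)$ and $S_1(B,\cdot)$ via the Gauss maps. The one genuine gap is your handling of the fiber multiplicity. Coarea gives
\[
\int_{\nbk} f(x,b)\,\frac{1}{\sqrt{1+k(K,B;x,b)^2}}\,\mathcal{H}^1(\D(x,b))
=\int_{\bd K}\sum_{b:\,(x,b)\in\nbk} f(x,b)\,\mathcal{H}^1(\D x),
\]
and collapsing the inner sum requires $\card\Pi_1^{-1}(\{x\})=1$ for $\mathcal{H}^1$-a.e.\ $x\in\bd K$. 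You claim this was ``established just before Lemma~\ref{Lemma6.3}, namely $\mathcal{H}^1(\bd(K+tB)\setminus\DKB)=0$'', but that result concerns a.e.\ \emph{differentiability} of $p(K,B,\cdot)$ and $d(K,B,\cdot)$, not injectivity of the projection onto $\bd K$; a point $y\in\DKB$ may well project to a corner $x$ of $K$ over which many $b$'s sit. Your fallback, that a planar convex body is ``strictly convex off a countable set'', is false as stated (a polygon has whole segments of non-extreme boundary points), so it does not settle the analogous issue for $\Pi_2$ either.

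The paper fills this gap by a curvature argument that is the key missing idea in your plan: if $\card\Pi_1^{-1}(\{x\})>1$ and $x+tb\in\DKB$, then there is an arc $\gamma$ in $\bd B$ with $u(K,B,x+t\gamma(s))=\gamma(s)$; differentiating at $s=0$ forces $\dif u(K,B,x+tb)(v)=t^{-1}v$, i.e.\ $k(K,B;x,b)=\infty$. Thus the Jacobian $(1+k^2)^{-1/2}$ vanishes identically on the multi-fiber locus, and feeding $f(x,b)=\I\{\card\Pi_1^{-1}(\{x\})>1\}$ into the coarea identity shows that this locus has $\mathcal{H}^1$-measure zero in $\bd K$. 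The dual computation (an arc in $\bd K$ over a fixed $b$) gives $k(K,B;x,b)=0$, killing the factor $k(1+k^2)^{-1/2}$ and yielding $\card\Pi_2^{-1}(\{b\})=1$ a.e.\ in $\bd B$. Once this is in place, the rest of your step (iii) goes through exactly as you indicate.
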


\begin{proof}
It follows from \eqref{Tanspace} that the approximate Jacobian of the surjective Lip\-schitz map $\Pi_1:\nor(K,B)\to\bd K$ with  $(x,b)\mapsto x$ is equal to
$$
{\rm ap}\, J_1\Pi_1(x,b)=\frac{1}{\sqrt{1+k(K,B;x,b)^2}}
$$
for $\mathcal{H}^1$ almost all $(x,b)\in \nor(K,B)$. Let $f:\nor(K,B)\to[0,\infty]$ be $\Hc^1$ integrable. Then the coarea formula implies that
\begin{align}
&\int_{\nor(K,B)}f(x,b)\frac{1}{\sqrt{1+k(K,B;x,b)^2}}\, \mathcal{H}^1(\D(x,b))\nonumber\\
&=\int_{\bd K}\int_{\Pi_1^{-1}(\{x\})}f(x,b)\, \mathcal{H}^0(\D(x,b))\, \mathcal{H}^1(\D x).\label{coarea1}
\end{align}
Let $(x,b)\in\nor(K,B)$ be such that $x+(0,\infty)b\subset \DKB$ and $\card\Pi_1^{-1}(\{x\})>1$. Then there is some $\bar b\in\bd B \setminus\{b\}$ such that for all $b'\in\bd B$ from an arc ${\rm arc}_B(b,\bar b)$ connecting $b$ and $\bar b$ we have $x+tb'\in\bd (K+tB)$ for any $t>0$. Let $u:= u(x,b)$ and $v\in u^\perp\cap\, \Sp^1$. Further, let $\gamma:[0,1]\to  {\rm arc}_B(b,\bar b)$ be a map with $\gamma(0)=b$ which is differentiable at $0$ and satisfies $\gamma_r'(0)\not=o$. Since $y:= x+tb$ is a regular boundary point of $K+tB$ and $\Gamma(s):= x+\gamma(s)\in \bd(K+B)$ for $s\in [0,1]$, it follows that $\Gamma'_r(0)= \gamma'_r(0) = \lambda v$ for some $\lambda\not=0$. Clearly, we have $u(K,B,x+tb')=b'$ for all $b'\in{\rm arc}_B(b,\bar b)$, hence $u(K,B,x+t\gamma(s))=\gamma(s)$ for $s\in[0,1]$. The map $u(K,B,\cdot)$ is differentiable at $y$, and the right derivative of $s\mapsto x+t\gamma(s)$ at $s=0$ exists and is equal to $t\lambda v$. This yields
\begin{equation}\label{Dif} 
\dif u(K,B,x+tb)(t\lambda v)=\lambda v
\end{equation}
and thus $\dif u(K,B,y)(v)=t^{-1}v$. Therefore, $k(K,B,x+tb)=1/t$. It follows that $k(K,B;x,b)=\infty$. 
Choosing
$$
f(x,b):=\I\{x\in\bd K:\card \Pi_1^{-1}(\{x\})>1\},
$$
we get from \eqref{coarea1} that $\card \Pi_1^{-1}(\{x\})=1$ for $\mathcal{H}^1$ almost all $x\in\bd K$. Then we apply again \eqref{coarea1} with $f(x,b)=\I\{u(x,b)\in\omega\}$. Since $u(x,b)\in N(K,x)\cap N(B,b)$ by (\ref{Nor}), it follows that $u(x,b)=u_K(x)$, the unique exterior unit normal of $K$ at $x\in\bd K$, for $\mathcal{H}^1$ almost all $x\in\bd K$ and (the unique) $(x,b)\in \nbk$. Then we get
\begin{align*}
&\int_{\nor(K,B)}\I\{u(x,b)\in \omega\}\frac{1}{\sqrt{1+k(K,B;x,b)^2}}\,\mathcal{H}^1(\D(x,b))\\
&=\int_{\bd K}\I\{u_K(x)\in\omega\}\,  \mathcal{H}^1(\D x)=S_1(K,\omega).
\end{align*}

For the proof of the second assertion, we proceed similarly and consider the surjective Lipschitz map $\Pi_2:\nor(K,B)\to\bd K$,  $(x,b)\mapsto b$, with approximate Jacobian
$$
{\rm ap}\, J_1\Pi_2(x,b)=\frac{k(K,B;x,b)}{\sqrt{1+k(K,B;x,b)^2}}
$$
for $\mathcal{H}^1$ almost all $(x,b)\in \nor(K,B)$. If $f:\nor(K,B)\to[0,\infty]$ is measurable, then the
coarea formula implies that
\begin{align}
&\int_{\nor(K,B)}f(x,b)\frac{k(K,B;x,b)}{\sqrt{1+k(K,B;x,b)^2}}\, \mathcal{H}^1(\D(x,b))\nonumber\\
&=\int_{\bd B}\int_{\Pi_2^{-1}(\{b\})}f(x,b)\, \mathcal{H}^0(\D(x,b))\, \mathcal{H}^1(\D b).\label{coarea2}
\end{align}
Let $(x,b)\in\nor(K,B)$ be such that $x+b\in\DKB$ and $\card\Pi_2^{-1}(\{b\})>1$. Then there are $x,\bar x\in\bd K$, $x\neq \bar x$, with $(\bar x,b)\in\nbk$. Then $x+tb\neq \bar x+tb$ are boundary points of $K+tB$ such that for all $x'\in\bd K$ from an arc between $x,\bar x$ we have $x'+tb\in\bd (K+tB)$ and $u(K,B,x'+tb)=b$. Arguing as in the derivation of (\ref{Dif}), we obtain $\dif u(K,B,x+tb)(v)=o =k(K,B,x+tb)v$ and thus $k(K,B;x,b)=0$. Now the proof can be completed as before by applying twice formula \eqref{coarea2}. Here we use again that $u(x,b)\in N(K,x)\cap N(B,b)$ and $u(x,b)=u_B(b)$ for $\mathcal{H}^1$ almost every $b\in\bd B$ and (the unique) $(x,b)\in\nbk$.
\end{proof}

\section{Proof of Theorem \ref{Thm2}}

Let $K\in {\mathcal K}^2$ and $o\in{\rm int}\,K$, without loss of generality. We assume that $K$ and $-K$ are in general relative position and apply the results of Section \ref{relnormal} to $K$ and $B=-K$. Then $(x,y)\in\nor(K,-K)$ if and only if  there is some $u\in{\mathbb S^1}$ such that $x\in F(K,u)$ and $y\in F(-K,u)$. Recall that then $x+y\in F({\rm D}K,u)$ and $-y\in F(K,-u)$. We consider the Lipschitz map
$$
\Phi:\nor(K,-K)\times [0,1]\to K,\quad (x,y,t)\mapsto (1-t)x-ty,
$$
for which we have (recalling that $u(x,y)= u_{K-K}(x+y)$ and $v\in u(x,y)^\perp\cap {\mathbb S}^1$)
\begin{align*}
&\text{\rm ap}J_2\Phi(x,y,t)\\&=\left|\det\left(x+y,\frac{1-t}{\sqrt{1+k(K,-K;x,y)^2}}v-t\frac{k(K,-K;x,y)}{\sqrt{1+k(K,-K;x,y)^2}}v\right)\right|\\
&=h({\rm D}K,u(x,y))\left|\frac{1-t}{\sqrt{1+k(K,-K;x,y)^2}}-\frac{tk(K,-K;x,y)}{\sqrt{1+k(K,-K;x,y)^2}}\right|\\
&\le h({\rm D}K,u(x,y))\left[\frac{1-t}{\sqrt{1+k(K,-K;x,y)^2}}+\frac{tk(K,-K;x,y)}{\sqrt{1+k(K,-K;x,y)^2}}\right] ,
\end{align*}
for $\mathcal{H}^1$-almost all $(x,y)\in\nor(K,-K)$ and all $t\in (0,1)$.  Thus, applying the coarea formula, we get
\begin{align*}
&2V(K)N_a(K)\\
& =\int_K {\rm card}\,\Phi^{-1}(z)\,\D z \\
&=\int_{\nor(K,-K)}\int_0^1\text{\rm ap} J_2\Phi(x,y,t)\,\D t\,\mathcal{H}^1(\D(x,y))\\
&\le \frac{1}{2}\int_{\nor(K,-K)}h(DK,u(x,y))\frac{1}{\sqrt{1+k(K,-K;x,y)^2}}\,\mathcal{H}^1(\D(x,y))\\
&\hspace{11pt}+\frac{1}{2}\int_{\nor(K,-K)}h(DK,u(x,y))\frac{k(K,-K;x,y)}{\sqrt{1+k(K,-K;x,y)^2}}\,\mathcal{H}^1(\D(x,y)).
\end{align*}
An application of Lemma \ref{Lemma4.5} then implies that
\begin{align*}
&2V_2(K)N_a(K)\\
&\le\frac{1}{2}\int_{\Sp^1}h(DK,u)\, S_1(K,\D u) +\frac{1}{2}\int_{\Sp^1}h(DK,u)\, S_1(-K,\D u)\\
&=V(DK,K)+V(DK,-K)\\
&=V_2(DK).
\end{align*}

The right-hand inequality in (\ref{T3}) now follows from the Rogers--Shephard inequality for the difference body (see, e.g., \cite[Section 10.1]{Sch14}) together with the information on the equality sign.

Concerning the left-hand inequality in (\ref{T3}), we remark that any point of a convex body belongs to an affine diameter (\cite{Sol05}, assertion {\bf 3.3}). If $K$ is centrally symmetric and $K$ and $-K$ are in general relative position, which implies that $K$ is strictly convex, then each point of $K$ different from the centre lies on precisely one affine diameter, so equality holds in the left-hand side of (\ref{T3}). Assume next that equality holds there and $K$ and $-K$ are in general relative position. We claim that, under these conditions, all affine diameters of $K$ have a point in common. A theorem of Hammer \cite{H54} (see also Busemann \cite{B55}, pp. 89--90) implies then that $K$ is centrally symmetric.

For the proof, we remark first that every oriented affine diameter $[a,a_1]$ (oriented by demanding that $a_1$ be its endpoint) determines uniquely an angle $\alpha\in[0,2\pi)$ such that $a_1-a$ is a positive multiple of $(\cos\alpha,\sin\alpha)$. We call $\alpha$ the {\em angle of} $[a,a_1]$. Conversely, every $\alpha\in[0,2\pi)$ is the angle of a unique oriented affine diameter $[a,a_1]$. The existence follows from the fact that a longest chord of given direction in a convex body is an affine diameter; see, e.g., \cite{Sol05}, Proposition {\bf 3.1}. The uniqueness follows from the assumption that the boundary of $K$ does not contain segments in distinct parallel support lines. It is then easy to see that an oriented affine diameter depends continuously on its angle.


Assume now, contrary to the claim, that not all affine diameters of $K$ pass through one point. Then, since any two affine diameters intersect, there are three affine diameters $[a,a_1]$, $[b,b_1]$ and $[c,c_1]$ that bound a non-degenerate triangle $\Delta$. We choose the notation so that the points $a,b,c,a_1,b_1,c_1$ come in this order anticlockwise on $\bd K$ (some of the points may coincide), and that $\Delta$ is on the left-hand side of the oriented segments $[a,a_1]$, $[c,c_1]$ and on the right-hand side of $[b,b_1]$. This is clearly possible. We can also choose the coordinate system in such a way that the angles $\alpha,\beta, \gamma$ of the segments $[a,a_1]$, $[b,b_1]$, $[c,c_1]$ satisfy $0=\alpha < \beta < \gamma < \pi$.

Let $x$ be an interior point of $\Delta$. Since $x$ is on the left side of $[a,a_1]$ and on the right side of $[b,b_1]$, there is by continuity an angle between $\alpha$ and $\beta$ such that the oriented affine diameter with this angle passes through $x$. Similarly, there is an angle between $\beta$ and $\gamma$ for which the corresponding oriented affine diameter passes through $x$. The two unoriented affine diameters through $x$ obtained in this way are distinct. We conclude that $N_a(K)\ge 1+\lambda_2 (\Delta)/\lambda_2(K) >1$, a contradiction.

\section{On the Lipschitz continuity of the $B$-projection}

An indispensable prerequisite for the proof of Theorem \ref{Thm2} was Lemma \ref{L4.1}, saying that for convex bodies $K,B\in{\mathcal K}^2$ in general relative position, the $B$-projection to ${\rm bd}\,K$ is a Lipschitz map. We show by an example that there is no corresponding result in higher dimensions. Therefore, the proof of Theorem \ref{Thm2} does not extend to $n\ge 3$. The example might also be of independent interest, as it shows that the metric projection in higher-dimensional Minkowski spaces is in general not Lipschitz continuous.

In order that the $B$-projection be defined, we need an additional assumption. We say that $K,B\in{\mathcal K}^n$ are {\em in strongly general relative position} if
$$ \dim F(K,u) + \dim F(B,u) = \dim F(K+B,u)\qquad \text{for all }u\in{\mathbb S}^{n-1}.$$
In the following, we construct two convex bodies $K,B\in{\mathcal K}^3$ in strongly general relative position for which the $B$-projection to ${\rm bd}\,K$ is not Lipschitz.

In ${\mathbb R}^3$ with the standard basis we consider the points
$$ x_n:= \left(\frac{1}{n},\frac{1}{n^2},0\right),\quad y_n:= \left(\frac{1}{n},\frac{1}{n^2},\frac{1}{n}\right)\quad\text{for } n\in{\mathbb N}.$$

Since the points $x_n$ lie in a plane and on a convex curve and the points $y_n$ lie in a different plane, it is clear that none of the points $x_n,y_n$ lies in the convex hull of the others.

\begin{figure}
\centering
\includegraphics[width=10cm,angle=0]{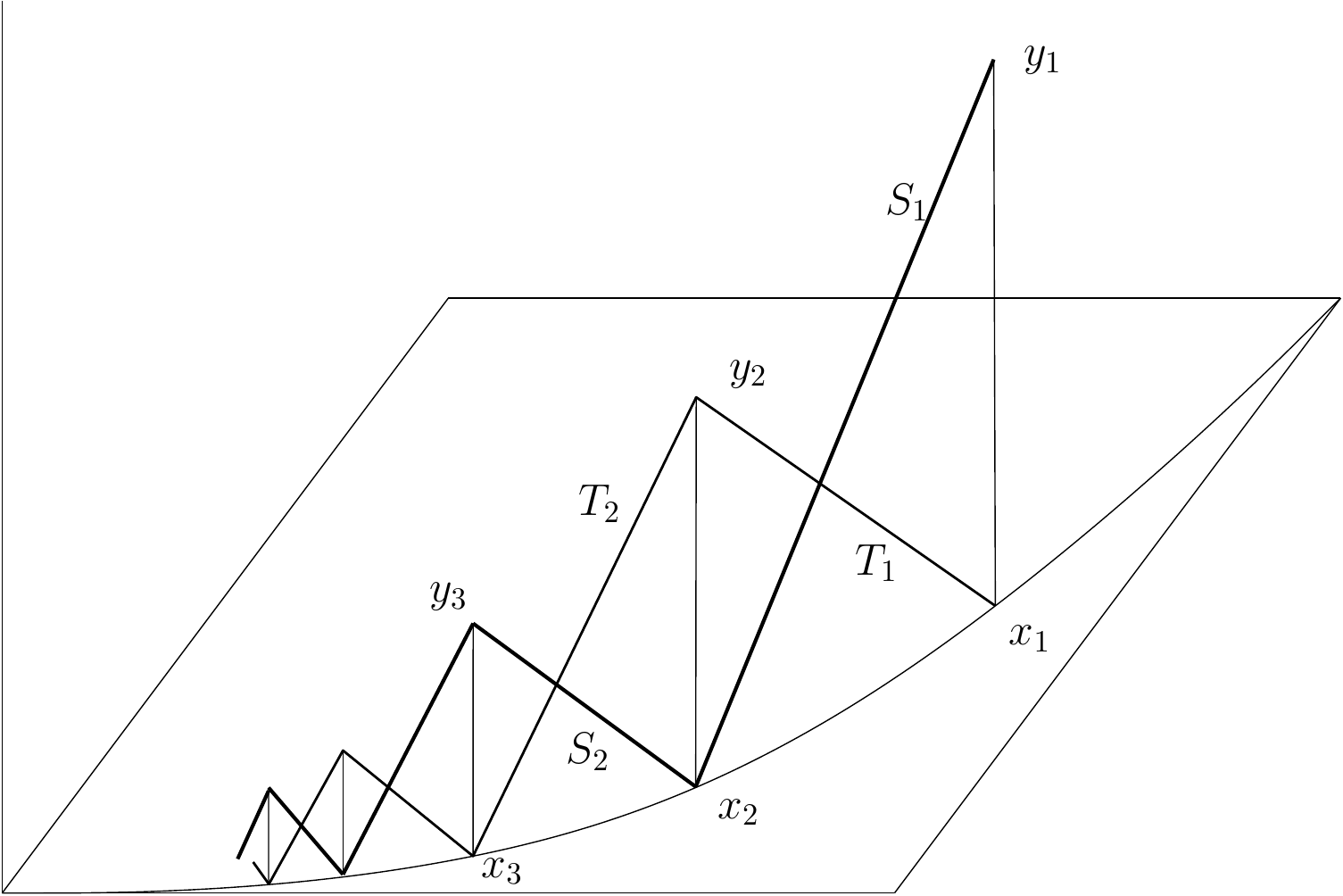}
\caption{The two zig-zag polygons (which are not to scale) are used in the construction of $B$ (heavy lines) and $K$ (heavier lines).}
\end{figure}

For $n\in{\mathbb N}$, we define the segments
$$ S_n: =\left\{\begin{array}{ll} [x_{n+1},y_n] & \text{if $n$ is odd},\\[2mm] [x_n,y_{n+1}] & \text{if $n$ is even},\end{array}\right.
$$
$$
T_n: =\left\{\begin{array}{ll} [x_n,y_{n+1}] & \text{if $n$ is odd},\\[2mm] [x_{n+1},y_n] & \text{if $n$ is even}. \end{array} \right.
$$

Let $n\in{\mathbb N}$. The four points $x_n,y_n, x_{n+1},y_{n+1}$ lie in a plane $H_n$. Let $H_n^0$ be the open halfspace bounded by this plane and containing $o$. Then $x_j,y_j\in H_n^0$ for all $j\notin\{n,n+1\}$. It follows that
$$  H_n\cap {\rm cl}\,{\rm conv} \bigcup_{j\in {\mathbb N}} S_j = S_n,\quad
H_n\cap {\rm cl}\,{\rm conv} \bigcup_{j\in {\mathbb N}} T_j = T_n.$$

Now we define
\begin{eqnarray*}
K &:=& {\rm cl}\,{\rm conv}\left(\bigcup_{n\in {\mathbb N}} S_n \cup \{(0,1,1),(0,1,-1)\}\right),\\
B &:=& {\rm cl}\,{\rm conv}\bigcup_{n\in {\mathbb N}} T_n.
\end{eqnarray*}
It is elementary to check that $K$ and $B$ are in strongly general relative position.

Now let $n\in{\mathbb N}$ be odd. By the properties of the plane $H_n$ mentioned above, there are a unit vector $u$ and a vector $z\in{\mathbb R}^3$ such that
$$ H_n = H(K,u) = H(-B+z,-u).$$
The vector $z_0:= x_n+y_{n+1}$ satisfies
$$ -T_n+z_0=-[x_n,y_{n+1}] +z_0 = [-y_{n+1},-x_n]+z_0=[x_n,y_{n+1}]=T_n,$$
hence $(-T_n+z_0)\cap S_n\not=\emptyset$. There are other vectors $z$ (with $z-z_0$ parallel to $H_n$) for which $-T_n+z$ and $S_n$ intersect in a point $q(z)$. This point is then the unique point in $K\cap (-B+z)$, from which it follows that $ q(z)= p(K,B,z)$. In particular, choosing $z_1:=z_0+x_{n+1}-x_n= x_{n+1}+y_{n+1}$, we get $q(z_1)=x_{n+1}$. Further, we can choose $z_2:= z_0+\lambda(x_n-x_{n+1})$ with suitable $\lambda\in(0,1)$ to obtain $q(z_2)= y_{n+1}+\lambda(x_n-x_{n+1})\in S_n$. With these choices we have
$$ \|p(K,B,z_1)- p(K,B,z_2)\| =\|q(z_1)-q(z_2)\| > \|x_{n+1}-y_{n+1}\|= \frac{1}{n+1}$$
and
$$ \|z_1-z_2\| =(1+\lambda)\|x_n-x_{n+1}\|<\frac{2\sqrt{13}}{(n+1)^2}.$$
Thus we get
$$ \frac{\|p(K,B,z_1)- p(K,B,z_2)\|}{\|z_1-z_2\|}> \frac{n+1}{2\sqrt{13}}.$$
Since here $n$ may be chosen arbitrarily large, this shows that the map $p(K,B,\cdot)$ does not have the Lipschitz property.

To obtain the final counterexample, we may replace $B$ by a translate which has the origin as an interior point. Further, it is not difficult to modify the example in such a way that the gauge body $B$ becomes centrally symmetric with respect to $o$.

\end{document}